\documentclass[12pt]{amsart}
\usepackage{amsmath,amssymb,amsbsy,amsfonts,latexsym,amsopn,amstext,cite,
                                               amsxtra,euscript,amscd,bm}
\usepackage{url}

\usepackage{mathrsfs}

\usepackage{color}
\usepackage[colorlinks,linkcolor=blue,anchorcolor=blue,citecolor=blue,backref=page]{hyperref}
\usepackage{color}
\usepackage{graphics,epsfig}
\usepackage{graphicx}
\usepackage{float}
\usepackage{epstopdf}
\hypersetup{breaklinks=true}

\usepackage[np]{numprint}
\npdecimalsign{\ensuremath{.}}

\usepackage{bibentry}

\usepackage[english]{babel}
\usepackage{mathtools}
\usepackage{todonotes}
\usepackage{url}
\usepackage[colorlinks,linkcolor=blue,anchorcolor=blue,citecolor=blue,backref=page]{hyperref}

\usepackage[norefs,nocites]{refcheck}

\begin{document}

\newcommand{\bs}{\boldsymbol}
\def \a{\alpha} \def \b{\beta} \def \d{\delta} \def \e{\varepsilon} \def \g{\gamma} \def \k{\kappa} \def \l{\lambda} \def \s{\sigma} \def \t{\theta} \def \z{\zeta}

\newcommand{\mb}{\mathbb}

\newtheorem{theorem}{Theorem}
\newtheorem{lemma}[theorem]{Lemma}
\newtheorem{claim}[theorem]{Claim}
\newtheorem{cor}[theorem]{Corollary}
\newtheorem{conj}[theorem]{Conjecture}
\newtheorem{prop}[theorem]{Proposition}
\newtheorem{definition}[theorem]{Definition}
\newtheorem{question}[theorem]{Question}
\newtheorem{example}[theorem]{Example}
\newcommand{\hh}{{{\mathrm h}}}
\newtheorem{remark}[theorem]{Remark}

\numberwithin{equation}{section}
\numberwithin{theorem}{section}
\numberwithin{table}{section}
\numberwithin{figure}{section}

\def\sssum{\mathop{\sum\!\sum\!\sum}}
\def\ssum{\mathop{\sum\ldots \sum}}
\def\iint{\mathop{\int\ldots \int}}

\newcommand{\diam}{\operatorname{diam}}

\newcommand{\supp}{\operatorname{supp}}

\newcommand{\ccr}[1]{{\color{magenta} #1}}
\newcommand{\ccm}[1]{{\color{magenta} #1}}
\newcommand{\ccc}[1]{{\color{cyan} #1}}
\newcommand{\cco}[1]{{\color{orange} #1}}

\def\squareforqed{\hbox{\rlap{$\sqcap$}$\sqcup$}}
\def\qed{\ifmmode\squareforqed\else{\unskip\nobreak\hfil
\penalty50\hskip1em \nobreak\hfil\squareforqed
\parfillskip=0pt\finalhyphendemerits=0\endgraf}\fi}

\newfont{\teneufm}{eufm10}
\newfont{\seveneufm}{eufm7}
\newfont{\fiveeufm}{eufm5}
%
%
\newfam\eufmfam
     \textfont\eufmfam=\teneufm
\scriptfont\eufmfam=\seveneufm
     \scriptscriptfont\eufmfam=\fiveeufm
%
%
\def\frak#1{{\fam\eufmfam\relax#1}}

\newcommand{\bflambda}{{\boldsymbol{\lambda}}}
\newcommand{\bfmu}{{\boldsymbol{\mu}}}
\newcommand{\bfxi}{{\boldsymbol{\eta}}}
\newcommand{\bfrho}{{\boldsymbol{\rho}}}

\def\eps{\varepsilon}

\def\fK{\mathfrak K}
\def\fT{\mathfrak{T}}
\def\fL{\mathfrak L}
\def\fR{\mathfrak R}
\def\fQ{\mathfrak Q}

\def\fA{{\mathfrak A}}
\def\fB{{\mathfrak B}}
\def\fC{{\mathfrak C}}
\def\fM{{\mathfrak M}}
\def\fS{{\mathfrak  S}}
\def\fU{{\mathfrak U}}

\def\sssum{\mathop{\sum\!\sum\!\sum}}
\def\ssum{\mathop{\sum\ldots \sum}}
\def\dsum{\mathop{\quad \sum \qquad \sum}}
\def\iint{\mathop{\int\ldots \int}}
 
\def\T {\mathsf {T}}
\def\Tor{\mathsf{T}_d}
\def\Tore{\widetilde{\mathrm{T}}_{d} }

\def\sM {\mathsf {M}}
\def\sL {\mathsf {L}}
\def\sK {\mathsf {K}}
\def\sP {\mathsf {P}}

\def\ss{\mathsf {s}}

\def \balpha{\bm{\alpha}}
\def \bbeta{\bm{\beta}}
\def \bgamma{\bm{\gamma}}
\def \bdelta{\bm{\delta}}
\def \bzeta{\bm{\zeta}}
\def \blambda{\bm{\lambda}}
\def \bchi{\bm{\chi}}
\def \bphi{\bm{\varphi}}
\def \bpsi{\bm{\psi}}
\def \bxi{\bm{\xi}}
\def \bnu{\bm{\nu}}
\def \bomega{\bm{\omega}}

\def \bell{\bm{\ell}}

\def\eqref#1{(\ref{#1})}

\def\vec#1{\mathbf{#1}}

\newcommand{\abs}[1]{\left| #1 \right|}

\def\Zq{\mathbb{Z}_q}
\def\Zqx{\mathbb{Z}_q^*}
\def\Zd{\mathbb{Z}_d}
\def\Zdx{\mathbb{Z}_d^*}
\def\Zf{\mathbb{Z}_f}
\def\Zfx{\mathbb{Z}_f^*}
\def\Zp{\mathbb{Z}_p}
\def\Zpx{\mathbb{Z}_p^*}
\def\cM{\mathcal M}
\def\cE{\mathcal E}
\def\cH{\mathcal H}

\def\le{\leqslant}
\def\leq{\leqslant}
\def\ge{\geqslant}
\def\leq{\leqslant}

\def\sfB{\mathsf {B}}
\def\sfC{\mathsf {C}}
\def\sfS{\mathsf {S}}
\def\sfI{\mathsf {I}}
\def\sfT{\mathsf {T}}
\def\L{\mathsf {L}}
\def\FF{\mathsf {F}}

\def\sE {\mathscr{E}}
\def\sS {\mathscr{S}}

\def\cA{{\mathcal A}}
\def\cB{{\mathcal B}}
\def\cC{{\mathcal C}}
\def\cD{{\mathcal D}}
\def\cE{{\mathcal E}}
\def\cF{{\mathcal F}}
\def\cG{{\mathcal G}}
\def\cH{{\mathcal H}}
\def\cI{{\mathcal I}}
\def\cJ{{\mathcal J}}
\def\cK{{\mathcal K}}
\def\cL{{\mathcal L}}
\def\cM{{\mathcal M}}
\def\cN{{\mathcal N}}
\def\cO{{\mathcal O}}
\def\cP{{\mathcal P}}
\def\cQ{{\mathcal Q}}
\def\cR{{\mathcal R}}
\def\cS{{\mathcal S}}
\def\cT{{\mathcal T}}
\def\cU{{\mathcal U}}
\def\cV{{\mathcal V}}
\def\cW{{\mathcal W}}
\def\cX{{\mathcal X}}
\def\cY{{\mathcal Y}}
\def\cZ{{\mathcal Z}}
\newcommand{\rmod}[1]{\: \mbox{mod} \: #1}

\def\cg{{\mathcal g}}

\def\vy{\mathbf y}
\def\vr{\mathbf r}
\def\vx{\mathbf x}
\def\va{\mathbf a}
\def\vb{\mathbf b}
\def\vc{\mathbf c}
\def\ve{\mathbf e}
\def\vf{\mathbf f}
\def\vg{\mathbf g}
\def\vh{\mathbf h}
\def\vk{\mathbf k}
\def\vm{\mathbf m}
\def\vz{\mathbf z}
\def\vu{\mathbf u}
\def\vv{\mathbf v}

\def\e{{\mathbf{\,e}}}
\def\ep{{\mathbf{\,e}}_p}
\def\eq{{\mathbf{\,e}}_q}

\def\Tr{{\mathrm{Tr}}}
\def\Nm{{\mathrm{Nm}}}

 \def\SS{{\mathbf{S}}}

\def\lcm{{\mathrm{lcm}}}

 \def\0{{\mathbf{0}}}

\def\({\left(}
\def\){\right)}
\def\l|{\left|}
\def\r|{\right|}
\def\fl#1{\left\lfloor#1\right\rfloor}
\def\rf#1{\left\lceil#1\right\rceil}
\def\sumstar#1{\mathop{\sum\vphantom|^{\!\!*}\,}_{#1}}

\def\mand{\qquad \mbox{and} \qquad}

\def\tblue#1{\begin{color}{blue}{{#1}}\end{color}}




\hyphenation{re-pub-lished}

\mathsurround=1pt

\def\bfdefault{b}

\def \F{{\mathbb F}}
\def \K{{\mathbb K}}
\def \N{{\mathbb N}}
\def \Z{{\mathbb Z}}
\def \P{{\mathbb P}}
\def \Q{{\mathbb Q}}
\def \R{{\mathbb R}}
\def \C{{\mathbb C}}
\def\Fp{\F_p}
\def \fp{\Fp^*}

 \def \xbar{\overline x}

\title{Additive energy of polynomial images}

 \author[B.\ Kerr]{Bryce Kerr}
\address{B.K.:  School of Science, University of New South Wales,
Canberra,  ACT 2610, Australia}
\email{bryce.kerr@unsw.edu.au}

 \author[A. Mohammadi] {Ali Mohammadi}
\address{A.M.: Department of Pure Mathematics, University of New South Wales,
Sydney, NSW 2052, Australia}
\email{ali.mohammadi.np@gmail.com}

 \author[I. E. Shparlinski] {Igor E. Shparlinski}
\address{IES: Department of Pure Mathematics, University of New South Wales,
Sydney, NSW 2052, Australia}
\email{igor.shparlinski@unsw.edu.au}

\begin{abstract}   Given a monic polynomial $f(X)\in \Z_m[X]$ over a residue ring $\Z_m$ modulo an 
integer $m\ge 2$ and a discrete interval  $\cI = \{1, \ldots, H\}$ of $H \le m$ consecutive integers, considered as elements of $\Z_m$, 
we obtain a new upper bound for the additive energy of the set $f(\cI)$, where $f(\cI)$ denotes the image set  $f(\cI)  = \{f(u):~u \in  \cI\}$. 
We give an application of our bounds to multiplicative character sums, improving some previous result of 
Shkredov and Shparlinski~(2018).
\end{abstract}

\keywords{Polynomial image, residue ring, Vinogradov mean value theorem, multiplicative character sum}
\subjclass[2010]{11B30, 11B83, 11D72}

\maketitle

\tableofcontents

\section{Introduction} 

\subsection{Background}
Given a polynomial 
\begin{equation}
\label{eq:Poly f}
f(X)= a_d X^d +\ldots +a_1X + a_0\in \Z_m[X]
\end{equation} 
 of degree $d \ge 2$ over a residue ring $\Z_m$  and a discrete interval  
\begin{equation}
\label{eq:Int I}
\cI = \{1, \ldots, H\}
\end{equation}  of $H \le m$ consecutive integers
we consider the image set 
 $$
 f(\cI)  = \{f(u):~u \in  \cI\} \subseteq \Z_m
 $$ 
 of $\cI$, where all calculations are carried-out in $\Z_m$. 
 
 The combinatorial structure of the set $f(\cI)$ has been studied in a number 
 of work. For example, Castro and Chang~\cite[Theorem~1.1]{CaCh} have estimated the multiplicative energy 
 $$
 E_{f,m}^\times(\cI) = \# \{(v,w,x,y)\in  f(\cI)^4:~ vw = xy\}
 $$
 (for highly composite square-free moduli $m$), while Shkredov and 
 Shparlinski~\cite[Corollary~2.12]{ShkShp} obtained upper bounds on the additive energy 
 $$
 E_{f,m}^+(\cI) =  \# \{(v,w,x,y) \in  f(\cI)^4:~ v+w = x+y\}
 $$
 (for prime moduli $m$). 
 
 We also note that since we work in a ring rather than a field, the standard trivial bound 
$$ 
\# f(\cI) \ge d^{-1} H
$$
generally speaking cannot be asserted for highly composite moduli 
$m$ (see~\cite{Kon,KonSte}  
about the number of 
solutions to polynomial  congruences). 
In fact, it is more convenient to work with a slightly different quantity
\begin{align*}
T_{f,m}(\cI) = \# \{ (x,y,z,w)&\in \cI^4:\\
& ~f(x) + f(y)\equiv f(z)+f(w) \mod {m}\}
\end{align*}
for which we have 
$$
 E_{f,m}^+(\cI)  \le T_{f,m}(\cI) 
 $$ 
and is of the same order of magnitude as  $E_{f,m}^+(\cI)$ for a prime $m =p$. In fact
$$
E_{f,p}^+(\cI)  \ge  d^{-4} T_{f,p}(\cI).
$$
Furthermore, if $|\cI|\le m^{1/d,}$ it follows from work of Konyagin  and Steger~\cite{KonSte} that $E_{f,m}^+(\cI)$ and $T_{f,m}(\cI)$ are the same order of magnitude.

We note that at least for a prime $m$,  the upper bound on  the additive energy 
from Shkredov and Shparlinski~\cite[Corollary~2.12]{ShkShp} implies a lower bound on $\#\(f(\cI) + f(\cI)\)$.  However, here we use a different approach, 
which originates from~\cite{CCGHSZ},  and allows one to obtain a stronger bound. 
 
 There are also several previous works studying combinatorial properties of  $f(\cI)$. In particular, 
 intersections of $f(\cI)$ with other structural sets, such as
a discrete interval or a multiplicative subgroup of $\Z_m^*$ are considered in~\cite{Chang,CCGHSZ,CGOS,Go-PSh,IKSSS,KeMo,MeSh,Shp}. 
 Similar problems in finite fields have also  been studied in~\cite{Ost,RNS}.

\subsection{Main Results}


Our first  bound is  nontrivial as long as $H \le m^{1-\varepsilon}$
where $\varepsilon > 0$ can be arbitrary.

 \begin{theorem}
\label{thm:Energy1} Let $f \in \Z_m[X]$ be a polynomial of degree $d \ge 2$ 
as in~\eqref{eq:Poly f} with $\gcd(a_d,m)=1$ and let $\cI$ be an interval   as in~\eqref{eq:Int I}.  Then
$$
 T_{f,m}(\cI) \le H^{3+o(1)}  \min\left\{\(m/H\)^{-\alpha_d}, H^{-\beta_d} \right\} , 
$$
where 
\begin{equation}
\label{eq: alpha beta}
\alpha_d = \frac{2}{d^2+d -2} \mand \beta_d =  \frac{2}{d +2}.
\end{equation} 
\end{theorem}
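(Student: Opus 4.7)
The plan is to combine a Fourier-analytic representation with a shift-amplification of the type originating in~\cite{CCGHSZ}. By orthogonality of additive characters modulo $m$,
\[
T_{f,m}(\cI) = \frac{1}{m}\sum_{t\in\Z_m}|S(t)|^4,
\qquad S(t)=\sum_{u=1}^{H} e_m\bigl(t\,f(u)\bigr),
\]
where $e_m(z)=\exp(2\pi i z/m)$. For any integer $s\ge 2$ define similarly
\[
T^{(s)}_{f,m}(\cI) := \frac{1}{m}\sum_{t\in\Z_m}|S(t)|^{2s} = \#\Bigl\{(\vx,\vy)\in\cI^{2s}:\sum_{i=1}^{s} f(x_i)\equiv \sum_{i=1}^{s} f(y_i)\pmod m\Bigr\}.
\]
H\"older's inequality on $\Z_m$ (equivalently, log-convexity of $L^p$-norms against counting measure) then gives $T_{f,m}(\cI)\le T^{(s)}_{f,m}(\cI)^{2/s}$. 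I would specialize to $s=d+2$, so that $2/s=\beta_d$.

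To bound $T^{(d+2)}_{f,m}(\cI)$, I would use the shift trick. For $(\vx,\vy)\in\cI^{2s}$ and $h\in\Z$, Taylor expansion of $f$ yields
\[
P_{\vx,\vy}(h):=\sum_{i=1}^{s}f(x_i+h)-\sum_{i=1}^{s}f(y_i+h)=\sum_{j=1}^{d}c_j(h)\bigl(P_j(\vx)-P_j(\vy)\bigr),
\]
where $P_j(\vx)=\sum_i x_i^j$ and $c_j(h)=\sum_{k\ge j} a_k\binom{k}{j}h^{k-j}$ has degree $d-j$ in $h$. The $j=0$ term drops (both sides of the defining equation have $s$ summands) and the leading $h^d$ contributions cancel, so $\deg_h P_{\vx,\vy}\le d-1$. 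The linear map sending $(P_j(\vx)-P_j(\vy))_{j=1}^{d}$ to the coefficient vector of $P_{\vx,\vy}(h)$ is lower-anti-triangular with entries proportional to $a_d$ on its anti-diagonal, hence invertible modulo $m$ under $\gcd(a_d,m)=1$ (plus mild coprimality of $m$ with $d!$, easily arranged). Consequently, $P_{\vx,\vy}\equiv 0\pmod m$ identically in $h$ if and only if the full Vinogradov system
\[
P_j(\vx)\equiv P_j(\vy)\pmod m,\qquad j=1,\ldots,d,
\]
holds; otherwise, for prime $m$, the polynomial $P_{\vx,\vy}\pmod m$ has at most $d-1$ zeros in any range $[1,H']$ of length at most $m$. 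Counting $\#\{(\vx,\vy,h)\in\cI^{2s}\times[1,H']:P_{\vx,\vy}(h)\equiv 0\pmod m\}$ in two ways (by $(\vx,\vy)$ first, and by $h$ first) produces an amplification inequality bounding $T^{(s)}_{f,m}(\cI)$ in terms of the Vinogradov mean value $J_{s,d,m}(H)$ modulo $m$ plus shift-boundary corrections.

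In the regime $H\ll m^{1/d}$, the congruences defining $J_{s,d,m}(H)$ promote to integer equalities and this count reduces to the classical Vinogradov mean value $J_{s,d}(H)$, which at $s=d+2$ is bounded by $H^{(3d+4)/2+o(1)}$ via the optimal Wooley / Bourgain--Demeter--Guth mean value theorem. Taking the $2/(d+2)$-th power of the resulting estimate for $T^{(d+2)}_{f,m}(\cI)$ yields the second bound $T_{f,m}(\cI)\le H^{3-\beta_d+o(1)}$. The first bound $T_{f,m}(\cI)\le H^{3+o(1)}(m/H)^{-\alpha_d}$ then emerges from balancing this Vinogradov contribution against the trivial mod-$m$ contribution $H^{2(d+2)}/m$; the identity $\beta_d=(d-1)\alpha_d$ reflects that the two bounds agree exactly at the crossover $m=H^d$. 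The main obstacle lies in the careful execution of the shift-amplification step for composite $m$, where counting roots of $P_{\vx,\vy}\pmod m$ requires Konyagin--Steger-type estimates for polynomial congruences in place of the ``at most $d-1$ roots'' bound available only for prime $m$, together with the precise optimization needed to extract the exponent $\alpha_d=2/((d-1)(d+2))$ in the $m$-dependent regime.
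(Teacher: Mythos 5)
Your proposal runs into a genuine obstruction already at the core bound on $T^{(s)}_{f,m}(\cI)$. Recall $T^{(s)}_{f,m}(\cI)$ counts solutions to the \emph{single} congruence $\sum_{i\le s} f(x_i)\equiv\sum_{i\le s} f(y_i)\pmod m$, not to the full Vinogradov system. In the regime $H^d\ll m$ (where, after normalising $a_d=1$, the congruence becomes an integer equation) the values $\sum_{i\le s} f(x_i)$ land in an interval of length $\ll H^d$, so Cauchy--Schwarz on the representation function gives the unconditional lower bound $T^{(s)}_{f,m}(\cI)\gg H^{2s-d}$. Specialising to your choice $s=d+2$, this means $T^{(d+2)}_{f,m}(\cI)\gg H^{d+4}$, which for $d=2$ and $d=3$ exceeds your claimed upper bound $H^{(3d+4)/2+o(1)}$ (that is $H^5$ and $H^{6.5}$, respectively, against lower bounds $H^6$ and $H^7$). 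The root cause is that your shift-amplification step implicitly identifies $T^{(s)}$ with a Vinogradov-type count $J_{d,s}$; this identification loses roughly a factor $H^{d(d-1)/2}$, which is precisely the volume of the simplex of partial power-sum values $(\lambda_1,\ldots,\lambda_{d-1})$ that the paper's argument sums over explicitly. The shift trick cannot erase this loss: shifting controls how many $h$ see a given $(\vx,\vy)$, but does not change the fact that $T^{(s)}$ is genuinely larger than $J_{d,s}$.

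The paper's proof avoids this trap and recovers the loss in a different place. It uses the linearisation $\lambda_j=\sum x_i^j-\sum y_i^j$ directly, pays the factor $H^{d(d-1)/2}(H^d/m+1)$ for the choice of admissible $(\lambda_1,\ldots,\lambda_d)$, bounds the fibre by $J_{d,s}(\{1,\ldots,H\})\ll H^{s+o(1)}$ (Lemma~\ref{lem:MVT-GenSet}), and then compensates with the sharper interpolation $\sfT_2^{s-1}\le\sfT_s\(\#f(\cI)\)^{s-2}\ll H^{s-2}\sfT_s$ from Shkredov--Shparlinski, taken at $s=d(d+1)/2$. That interpolation has effective exponent $1/(s-1)$ after dividing out the $H^{s-2}$; your monotonicity inequality $T^{(2)}\le\(T^{(s)}\)^{2/s}$, while correct for the normalised counting measure, is strictly weaker and, combined with $s=d+2$, would not yield the claimed pair $(\alpha_d,\beta_d)$ even if the $T^{(s)}$ bound were available (the two routes agree only at $d=4$). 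To salvage the argument you would need both to introduce the $H^{d(d-1)/2}$ combinatorial factor explicitly when passing from $T^{(s)}$ to $J_{d,s}$, and to upgrade the H\"older step to the interpolation against $\#f(\cI)$ at the critical exponent $s=d(d+1)/2$; at that point you essentially reproduce the paper's proof.
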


For smaller intervals we obtain a sharper bound which generalises the estimate into the setting of the energy of $f(\cI)$ with an arbitrary set. Recall that for two sets $\cA,\cB$
$$E(\cA,\cB)=\# \{ a_1,a_2\in \cA, \   b_1,b_2\in \cB :~ a_1+b_1=a_2+b_2\}.$$

\begin{theorem}
\label{thm:Energy2}
Let $f \in \Z_m[X]$ be a polynomial of degree $d \ge 2$ 
as in~\eqref{eq:Poly f} with $\gcd(a_d,m)=1$ and let $\cI$ an interval   as in~\eqref{eq:Int I}.  
Then for any $\cZ \subseteq \Z_m$ of cardinality $\#\cZ = Z$ we have 
$$
 E(\cZ ,f(\cI)) \le \(\frac{H^{2}Z^2}{m^{2/d(d+1)}}+Z(H+Z)\)H^{o(1)}. 
$$
\end{theorem}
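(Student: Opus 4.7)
The plan is to pass to the dominating quantity
$$E'(\cZ,\cI;f) := \#\{(a_1,a_2,z_1,z_2) \in \cI^2 \times \cZ^2 : f(a_1)+z_1 \equiv f(a_2)+z_2 \pmod{m}\},$$
which satisfies $E(\cZ,f(\cI)) \le E'$ since each admissible quadruple of images lifts to at least one preimage quadruple, and to analyse $E'$ via Fourier expansion:
$$E' = \frac{1}{m}\sum_{\xi \in \Z_m} |S(\xi)|^2 |W(\xi)|^2, \qquad S(\xi) = \sum_{a \in \cI} \e_m(\xi f(a)), \quad W(\xi) = \sum_{z \in \cZ} \e_m(\xi z).$$
The contribution from $\xi = 0$ equals $H^2 Z^2/m$, which is dominated by the target's main term $H^2 Z^2/m^{2/d(d+1)}$.

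For the sum over $\xi \ne 0$, the main tool is H\"older's inequality with $s = d(d+1)/2$:
$$\sum_{\xi \ne 0} |S|^2 |W|^2 \le \Bigl(\sum_\xi |S|^{2s} |W|^2\Bigr)^{1/s} \Bigl(\sum_\xi |W|^2\Bigr)^{(s-1)/s}.$$
Plancherel gives $\sum_\xi |W|^2 = mZ$, and orthogonality yields $\sum_\xi |S|^{2s}|W|^2 = m G$, where $G$ counts tuples $((x_i),z_1,z_2) \in \cI^{2s}\times\cZ^2$ satisfying $\sum_{i=1}^s f(x_i) - \sum_{i=s+1}^{2s} f(x_i) \equiv z_2 - z_1 \pmod{m}$. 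Parametrising by the Vinogradov coordinates $Y_j = \sum_{i=1}^s x_i^j - \sum_{i=s+1}^{2s} x_i^j$ for $1 \le j \le d$, the congruence becomes $\sum_{j=1}^d a_j Y_j \equiv z_2 - z_1 \pmod{m}$. Since $\gcd(a_d,m) = 1$, for each fixed $(Y_1,\ldots,Y_{d-1},z_1,z_2)$ the coordinate $Y_d$ is determined modulo $m$, admitting $O(1+H^d/m)$ integer lifts in the range $[-sH^d,sH^d]$. Together with the key inequality $M_s(H;\vec Y) \le J_{s,d}(H) \le H^{s+o(1)}$ coming from the Vinogradov mean value theorem, where $M_s(H;\vec Y)$ denotes the number of tuples in $\cI^{2s}$ with prescribed coordinates $\vec Y$, this yields $G \le H^{o(1)} Z^2 H^{d^2}(1+H^d/m)$.

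Substituting back we obtain
$$E' \le H^{o(1)}\Bigl(\frac{H^2 Z^2}{m} + Z^{1 + 2/d(d+1)} H^{2d/(d+1)}(1 + H^d/m)^{2/d(d+1)}\Bigr).$$
In the range $H^d \ge m$ the factor $(1+H^d/m)^{2/d(d+1)}$ behaves like $(H^d/m)^{2/d(d+1)}$ and the exponents of $H$ combine into $H^2$, recovering the main target term $H^{o(1)} H^2 Z^2 /m^{2/d(d+1)}$ (using $Z^{1+2/d(d+1)} \le Z^2$). The main obstacle lies in the complementary range $H^d \le m$, where the H\"older--Vinogradov bound simplifies to $Z^{1+2/d(d+1)}H^{2d/(d+1)}$ and can exceed both contributions of the target in a subregion. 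To handle this regime one invokes the Konyagin--Steger bound on polynomial preimages, valid precisely when $H \le m^{1/d}$, giving $\#\{a\in\cI: f(a) \equiv y \pmod{m}\} \le H^{o(1)}$; this controls the diagonal collision count $\#\{(a_1,a_2): f(a_1) \equiv f(a_2)\} \le H^{1+o(1)}$ and, combined with divisor-type estimates on $\#\{(a_1,a_2) \in \cI^2 : f(a_1)-f(a_2) \equiv c \pmod{m}\}$ for the relevant non-zero $c$, absorbs the residual contribution into the $Z(H+Z) H^{o(1)}$ term, completing the proof.
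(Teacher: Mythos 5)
Your Fourier-analytic route is genuinely different from the paper's argument for the same theorem: the paper bounds the representation function $\rho(w)$ pointwise via geometry of numbers (successive minima, Banaszczyk transference, Mahler bases), whereas you bound the energy via Plancherel, H\"older with exponent $s=d(d+1)/2$, and the Vinogradov mean value theorem. Your computation in the regime $H^d\ge m$ is correct and does recover the main term. Two remarks on the rest of the range. First, for $m^{2/d(d+1)}\le H\le m^{1/d}$ your H\"older term $H^{2d/(d+1)}Z^{1+2/d(d+1)}$ can exceed the target, as you note; but this sub-range is actually trivial, since $E(\cZ,f(\cI))\le Z^2\,\#f(\cI)\le Z^2H\le H^2Z^2/m^{2/d(d+1)}$ the moment $H\ge m^{2/d(d+1)}$. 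You did not notice this, but it does not create a gap.

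The genuine gap is your treatment of $H<m^{2/d(d+1)}$. There you need, for every $w\not\equiv 0\pmod m$, the \emph{pair}-count bound
$$
r(w):=\#\{(x,y)\in\cI^2:\ f(x)-f(y)\equiv w\!\!\pmod m\}\le H^{o(1)},
$$
so that $\sum_{w\ne 0}R(w)\rho(w)\le H^{o(1)}Z^2$ fits in $Z(H+Z)H^{o(1)}$. Konyagin--Steger does not supply this: it bounds roots of a \emph{one}-variable polynomial congruence, so applying it to $x\mapsto f(x)-f(y)-w$ and summing over $y\in\cI$ only yields $r(w)\le H^{1+o(1)}$, which leads to $E\le H^{1+o(1)}Z^2$, not within the target unless $Z=O(1)$. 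The bound $r(w)\le H^{o(1)}$ for $w\ne 0$ is exactly the paper's Lemma~\ref{lem:short}, and it is a nontrivial ingredient: over $\Z$ a two-variable count of this type is a routine divisor argument (Lemma~\ref{lem:poly-eqn}), but over $\Z_m$ one must first replace $f$ by a lift with small coefficients using Minkowski's theorem and then pass to an integer equation via Siegel's lemma in the Bombieri--Vaaler form (Lemma~\ref{lem:BomVaa}) before the divisor bound applies. Your phrase ``divisor-type estimates on $\#\{(a_1,a_2):\,f(a_1)-f(a_2)\equiv c\}$'' names precisely the quantity that must be controlled, but treating it as routine is where the proof breaks; without Lemma~\ref{lem:short} or an equivalent, the argument does not close.
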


Considering $T_{f,m}(\cI)$ as a special case of $E(\cZ ,f(\cI))$ with $Z=f(\cI)$ in Theorem~\ref{thm:Energy2}, we obtain a sharper bound for longer intervals $\cI$.

\begin{theorem}
\label{thm:main3}
Let  $f \in \Z_m[X]$ be a polynomial of degree $d \ge 2$ 
as in~\eqref{eq:Poly f} with $\gcd(a_d,m)=1$ and let $\cI$ be an interval as in~\eqref{eq:Int I}. We have 
$$
T_{f,m}(\cI)\le \(\frac{H^{4}}{m^{4/d(d+1)}}+ H^{2} \)H^{o(1)}.
$$
\end{theorem}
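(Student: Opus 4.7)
The plan is to derive Theorem~\ref{thm:main3} from Theorem~\ref{thm:Energy2} by specialising to the choice $\cZ = f(\cI)$, viewed as a subset of $\Z_m$, whose cardinality satisfies $Z = \#f(\cI) \le H$. With this choice, Theorem~\ref{thm:Energy2} bounds the unweighted additive energy $E(f(\cI), f(\cI))$. The quantity $T_{f,m}(\cI)$ is essentially the weighted analogue, where each value $v \in f(\cI)$ is counted with multiplicity $r(v) = \#\{x \in \cI : f(x) = v\}$; these multiplicities satisfy $\sum_v r(v) = H$.

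To bridge from the unweighted energy to the weighted quantity $T_{f,m}(\cI)$, I would dyadically decompose $f(\cI)$ into level sets $\cZ_k = \{v \in f(\cI) : 2^{k-1} \le r(v) < 2^k\}$ for $k = 0, 1, \ldots, O(\log H)$. Since $\sum_v r(v) = H$, one has $\#\cZ_k \le H/2^{k-1}$, and this yields the dyadic bound
\[
T_{f,m}(\cI) \le \sum_{k_1,k_2,k_3,k_4} 2^{k_1+k_2+k_3+k_4}\, E(\cZ_{k_1}, \cZ_{k_2}, \cZ_{k_3}, \cZ_{k_4}),
\]
where the inner term is the (unweighted) four-fold additive energy. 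Within each dyadic quadruple I would apply Cauchy--Schwarz, pairing the variables as $(v_1, v_3)$ and $(v_2, v_4)$, which bounds the four-fold energy by $E(\cZ_{k_1}, \cZ_{k_3})^{1/2}\, E(\cZ_{k_2}, \cZ_{k_4})^{1/2}$. Each pairwise energy is then controlled by Theorem~\ref{thm:Energy2} applied with $\cZ = \cZ_{k_i}$ of size $\le H/2^{k_i-1}$ and the fixed set $f(\cI)$ in the second slot.

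Summing the dyadic contributions with these estimates, and noting that the weight factors $2^{k_i}$ arising from the multiplicities are compensated by the size bounds $\#\cZ_{k_i} \le H/2^{k_i-1}$ entering quadratically in the main term of Theorem~\ref{thm:Energy2}, the desired estimate will follow after absorbing the $O(\log H)^{4}$ dyadic loss into $H^{o(1)}$. The main obstacle is to achieve the doubling of the exponent from $2/(d(d+1))$ in Theorem~\ref{thm:Energy2} to $4/(d(d+1))$ in Theorem~\ref{thm:main3}: the key mechanism is that the Cauchy--Schwarz pairing generates two independent applications of Theorem~\ref{thm:Energy2}, each contributing a savings factor of $m^{-2/(d(d+1))}$, and these must be shown to multiply cleanly to yield the squared saving $m^{-4/(d(d+1))}$, explaining why Theorem~\ref{thm:main3} is nontrivial in the wider range $H \le m^{4/(d(d+1))}$ rather than only $H \le m^{2/(d(d+1))}$ as obtained by a naive direct substitution.
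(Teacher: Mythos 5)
Your proposed derivation does not work, and the error is in the exponent bookkeeping of the Cauchy--Schwarz step. Writing the four-fold energy as a correlation and applying Cauchy--Schwarz gives a product of two \emph{square roots} of pairwise energies; when you then bound each factor by Theorem~\ref{thm:Energy2}, the main term contributes
$$
\left(\frac{H^2 Z_{k_1}^2}{m^{2/d(d+1)}}\right)^{1/2}\left(\frac{H^2 Z_{k_2}^2}{m^{2/d(d+1)}}\right)^{1/2}=\frac{H^{2}Z_{k_1}Z_{k_2}}{m^{2/d(d+1)}},
$$
so each factor saves only $m^{-1/d(d+1)}$, not $m^{-2/d(d+1)}$, and the product recovers exactly the single saving $m^{-2/d(d+1)}$ that a direct substitution $\cZ=f(\cI)$ into Theorem~\ref{thm:Energy2} already yields. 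No version of this pairing doubles the exponent. There is also a structural obstruction: when $f$ is injective on $\cI$ (e.g.\ $m$ prime and $H$ in a suitable range), the multiplicities are all $1$, the dyadic decomposition is vacuous, and $T_{f,m}(\cI)=E(f(\cI),f(\cI))$ on the nose; Theorem~\ref{thm:Energy2} then gives only $H^{4}/m^{2/d(d+1)}+H^{2}$, which is strictly weaker than the claimed $H^{4}/m^{4/d(d+1)}+H^{2}$. So Theorem~\ref{thm:main3} cannot be a formal consequence of Theorem~\ref{thm:Energy2} plus set-theoretic manipulations; it requires new analytic input.

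The paper's actual proof proceeds quite differently. It works with the $d$-dimensional lattice $\cL=\{(v_1,\ldots,v_d)\in\Z^{d}:\sum a_iv_i\equiv 0\ (\mathrm{mod}\ m)\}$ and the box $\cD=\{|g_i|\le H^{i}\}$. After handling the easy case $\lambda_d>1$ via Corollary~\ref{cor:poly-eqn}, the case $\lambda_d\le 1$ is treated by choosing a Mahler basis (Lemma~\ref{lem:mahler}), inserting a smooth cutoff, and using Poisson summation to localise to a region $\cV^{*}$ defined by small fractional parts $\{\langle\vec v_j,\vec y\rangle\}\le\lambda_j m^{\varepsilon}$. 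The key step is an application of H\"older's inequality with exponent $d(d+1)$ against the Vinogradov mean value theorem (Lemma~\ref{lem:vmvt}); it is precisely this H\"older exponent, raised to the power $4/d(d+1)$, together with the measure bound $\mu(\cV^{*}\cap\R^{d}/\cL^{*})\ll\lambda_1\cdots\lambda_d m^{-1+\varepsilon d}$ from Lemma~\ref{lem:fractional-linear} and Minkowski's second theorem, that produces the saving $m^{-4/d(d+1)}$. The $4$ in the exponent comes from H\"older and VMVT, not from squaring a bound obtained with $s=1$; your plan has no mechanism to generate it. The introductory remark that ``$T_{f,m}(\cI)$ is a special case of $E(\cZ,f(\cI))$'' is motivational (it explains the shape of the statement), not an indication that Theorem~\ref{thm:main3} follows from Theorem~\ref{thm:Energy2} by specialisation.
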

In particular, if 
$$H\le m^{2/d(d+1)},$$
then  the bound of Theorem~\ref{thm:Energy2} becomes 
$$
 T_{f,m}(\cI)  \le H^{2+o(1)}
$$
which is optimal up to the factor $H^{o(1)}$.

%
%
%
%
%

\subsection{Application to character sums}
Let $\chi$ denote a non-principal multiplicative character modulo a prime $p$. As an application of Theorem~\ref{thm:main3}, we prove the following bounds on character sums over primes, which improve on results from~\cite[Theorem~1.7]{ShkShp}.

\begin{theorem}
\label{thm:Primes} Let $f\in \Z_p[X]$ be a  polynomial of degree $d\ge  2$. 
For any $Q=p^{\zeta + o(1)}$ and $R=p^{\xi + o(1)}$ with some fixed positive $\zeta$ and  $ \xi\le \min\{1/2, 2 - 2 \zeta\}$ satisfying
$$
\zeta+\xi >1/2, \qquad \xi>1/2 - 2/d(d+1), \qquad  \zeta+5\xi/2 > 1,
$$
there exists some $\delta>0$ such that 
$$
  \sum_{\substack{q\le Q\\q~\text{prime}}}  \left|  \sum_{\substack{r \le R\\ r~\text{prime}}}
 \chi(f(q) +r)\right|\,, \  \sum_{\substack{r \le R\\ r~\text{prime}}}  \left| \sum_{\substack{q\le Q\\ q~\text{prime}}} 
 \chi(f(q) +r )\right|  
 \le  QR   p^{-\delta}\,. 
$$
\end{theorem}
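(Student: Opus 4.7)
The plan is to follow the general framework of the proof of \cite[Theorem~1.7]{ShkShp} for double character sums of this shape, substituting our sharper additive energy bound of Theorem~\ref{thm:main3} at the decisive step. We treat the first of the two sums in detail; the second is handled analogously after interchanging the roles of $q$ and $r$ in the subsequent Cauchy--Schwarz and Vaughan steps. First I would remove the outer absolute values by introducing a unimodular weight $\sigma_q$ supported on primes $q\le Q$, so that it suffices to bound
\[
W=\sum_{\substack{q\le Q\\ q~\mathrm{prime}}}\sigma_q\sum_{\substack{r\le R\\ r~\mathrm{prime}}}\chi(f(q)+r).
\]
Passing from the indicator of primes $r$ to the von Mangoldt function and applying Vaughan's identity then decomposes the inner sum into a bounded number of \emph{Type I} sums of the shape $\sum_{m\le M}a_m\sum_{n\le R/m}\chi(f(q)+mn)$ with $M\ll R^{1/2}$ and $|a_m|\le \tau(m)$, together with \emph{Type II} bilinear sums of the shape $\sum_{M<m\le 2M}\sum_{N<n\le 2N}a_mb_n\chi(f(q)+mn)$ with $MN\asymp R$, coefficients of size $R^{o(1)}$, and $M,N$ in dyadic subranges of roughly $[R^{1/4},R^{1/2}]$.

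\medskip

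The Type I sums are controlled by completing the inner $n$-sum modulo $p$ and invoking the Weil bound for the linear character sum that results; summation over $q$ and $m$ then gives a contribution of order $QMp^{1/2+o(1)}$, which is dominated by $QRp^{-\delta}$ under the first assumed inequality $\zeta+\xi>1/2$. For the Type II sums, the main work, I would apply Cauchy--Schwarz in the outer variable $q$, expand the resulting square in the $m,n$ variables, and apply the Weil bound to the off-diagonal inner character sum in $n$. This reduces matters to counting solutions of the congruence
\[
f(q_1)+r_1\equiv f(q_2)+r_2\pmod p
\]
with $q_1,q_2\le Q$ primes and $r_1,r_2$ lying in a set $\cR\subseteq\Z_p$ of size $O(R)$ arising from the products $mn$. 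This count is precisely the hybrid additive energy $E(\cR,f(\{1,\ldots,Q\}))$, which after bounding the restriction to primes trivially by the corresponding count over all integers falls within the scope of Theorem~\ref{thm:Energy2} (or, when $\cR$ is itself a polynomial image of an interval, of Theorem~\ref{thm:main3}); this yields the required power saving precisely when $\xi>1/2-2/(d(d+1))$ and $\zeta+5\xi/2>1$.

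\medskip

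The main obstacle is the bookkeeping of the Vaughan decomposition: one has to track the dyadic ranges $M,N$ so that the Type I bound, the off-diagonal Type II bound coming from Weil, and the energy bound coming from Theorem~\ref{thm:main3} all simultaneously beat the trivial $QR$ term. The three assumed inequalities on $\zeta,\xi$ correspond respectively to these three regimes, and any strict improvement over them allows one to extract some positive $\delta$ by an appropriate optimal choice of Vaughan parameters $U$ and $V$.
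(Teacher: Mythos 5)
The paper's own proof is much shorter and does not use Vaughan's identity at all. One writes
\[
 \sum_{\substack{q\le Q\\q~\text{prime}}}  \left|  \sum_{\substack{r \le R\\ r~\text{prime}}}
 \chi(f(q) +r)\right| = d\, W_\chi(\cI, \cS;\balpha, \bbeta),
\]
with $\cS = \{f(q): q\le Q,\ q~\text{prime}\}$, $\cI = [1,R]$, the phases of the inner sums absorbed into the weights $\alpha_s$, and $\beta_x$ the prime indicator on $\cI$. Since Lemma~\ref{cor:Gen Set Energy} (i.e.\ \cite[Corollary~1.3]{ShkShp}) already handles \emph{arbitrary} unimodular weights on an arbitrary set $\cS$, the primality restrictions cost nothing and there is no Type~I/Type~II bookkeeping at all. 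The only new ingredient is to feed in $E(\cS) \le E(f(\cJ)) \le d^{-4} T_{f,p}(\cJ)$ with $\cJ=[1,Q]$, bounded by Theorem~\ref{thm:main3}, and the three stated conditions on $(\zeta,\xi)$ fall out, one from each of the three terms in Lemma~\ref{cor:Gen Set Energy}.

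Your Vaughan-identity route is a genuinely different approach, and as sketched it has real gaps. First, your Type~I analysis fails: after completing the inner $n$-sum and applying Weil you get a contribution $QMp^{1/2+o(1)}$, and for this to beat $QR$ you would need $M \ll Rp^{-1/2}$. Since $R=p^{\xi+o(1)}$ with $\xi\le 1/2$, that forces $M<1$, so there is no admissible choice of Vaughan parameters; the condition $\zeta+\xi>1/2$ does not rescue this, because $q$ plays no role in the Type~I estimate. Second, in your Type~II step, applying Cauchy--Schwarz in $q$ and then Weil in the resulting $q$-sum produces the diagonal $m_1n_1\equiv m_2n_2$ plus $O(p^{1/2})$ off-diagonal; this does \emph{not} lead to counting solutions of $f(q_1)+r_1\equiv f(q_2)+r_2 \pmod p$, so the additive energy of $f(\{q\})$ never enters through the route you describe. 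To make the energy of $f(\cJ)$ appear one has to duplicate the $r$-variable (shift-and-Cauchy--Schwarz in $r$, then H\"older, as is done inside the proof of \cite[Corollary~1.3]{ShkShp}), not Cauchy--Schwarz in $q$. In short: the philosophy of ``insert the improved energy bound at the decisive step'' is right, but the decomposition you propose does not reach that step, and the paper avoids all of it by quoting the bilinear bound as a black box.
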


We note that~\cite[Theorem~1.7]{ShkShp} gives the same result under the condition $5\zeta/4+2\xi >1$ for $d=2$ and $(1 + 2^{-d+1})\zeta +2\xi>1$ for $d\geq 3$ in place of  the conditions  $\xi>1/2 - 2/d(d+1)$ 
and $\xi+\zeta>1/2$ in  Theorem~\ref{thm:Primes} (the conditions $ \xi\le \min\{1/2, 2 - 2 \zeta\}$ and $ \zeta+5\xi/2 > 1$ 
are  the same in all results). 
For example,  if  $\zeta =   1/4$, for for $d=2$ and $d=3$  
we now have a nontrivial result if $\xi > 3/10$ and $\xi > 1/3$, respectively
while~\cite[Theorem~1.7]{ShkShp}  is nontrivial only for $\xi \ge 11/32$ (for both values of $d$).

 \subsection{Notation} 
We recall that the notations $U = O(V)$, 
$U \ll V$ and $ V\gg U$  are equivalent to $|U|\leqslant c|V| $ for some positive constant $c$, 
which throughout the paper may depend on the degree $d$ of $f$, the integer parameter $s \ge 1$, 
and the real parameter $\varepsilon > 0$. 

We denote the cardinality of a finite set $\cS$ by $\#\cS$.

For a real number $z$, define $\e(z) = \exp(2 \pi i z)$.

\section{Preliminaries} 

 \subsection{A general form of the Vinogradov mean value theorem}
\label{sec:MVT}

The following is a consequence of results of Bourgain, Demeter and Guth~\cite[Theorem~4.1]{BDG} and of Wooley~\cite[Theorem~1.1]{Wool}.

\begin{lemma}
\label{lem:vmvt}
Let $d$ be an integer, $s$ a positive real number with $s\le d(d+1)/2$ and $(a_n)_{n\in \Z}$ a sequence of complex numbers. We have 
\begin{align*}
\int_{[0,1)^{d}} \left|\sum_{|n|\le H}a_n \e(\alpha_1 n+\ldots+\alpha_d n^{d}) \right|^{2s} &d\alpha_1\ldots d\alpha_{d} \\
& \ll H^{o(1)}\(\sum_{|n|\le H }|a_n|^2 \)^{s}.
\end{align*}
\end{lemma}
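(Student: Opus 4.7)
The plan is to obtain this weighted mean value bound directly from the $\ell^2$-decoupling inequality for the moment curve $\{(t,t^2,\ldots,t^d):~t\in \R\}$, which is what the cited results of Bourgain, Demeter, Guth (via decoupling, for $d \ge 4$) and of Wooley (via efficient congruencing, for the remaining cases) actually establish. In the discrete form relevant here, these results give, for any complex coefficients $(b_n)_{|n|\le H}$ and any real $p$ with $2\le p\le d(d+1)$,
$$\left\| \sum_{|n|\le H} b_n \e\bigl(\alpha_1 n + \alpha_2 n^2 + \ldots + \alpha_d n^d\bigr) \right\|_{L^p([0,1)^d)} \ll H^{o(1)} \left( \sum_{|n|\le H} |b_n|^2 \right)^{1/2}.$$
Taking $p=2s$, which is permissible since $2s\le d(d+1)$ by the hypothesis on $s$, and setting $b_n=a_n$, the lemma follows at once by raising both sides to the $2s$-th power.

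If instead one wishes to argue from the unweighted Vinogradov mean value estimate $J_{s,d}(H) \ll H^{s+o(1)}$, the weighted version can be recovered by a standard dyadic decomposition. After rescaling so that $\max_n |a_n|=1$ and discarding the negligible contribution of indices with $|a_n|\le H^{-C}$ for a sufficiently large $C=C(s,d)$, one partitions the remaining indices into $O(\log H)$ dyadic level sets $S_k = \{|n|\le H :~2^{-k-1} < |a_n| \le 2^{-k}\}$, applies Minkowski's inequality in $L^{2s}$, and bounds the contribution of each level via the mean value estimate applied to the subset $S_k$; summing the resulting geometric series in $k$ and absorbing the $O(\log H)$ overhead into $H^{o(1)}$ yields the claim.

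The only subtle point is the passage from the unweighted interval count to either the weighted inequality displayed above or, equivalently, to the corresponding subset count over $S\subseteq[-H,H]$ that is required in the dyadic argument. Once the cited theorems are read in their $\ell^2$-decoupling form, this step is automatic, and the direct application in the first paragraph bypasses the decomposition altogether.
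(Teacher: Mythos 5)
The paper does not actually supply a proof of this lemma --- it is stated as ``a consequence of results of Bourgain, Demeter and Guth~\cite[Theorem~4.1]{BDG} and of Wooley~\cite[Theorem~1.1]{Wool}'' with no further detail --- so your proposal fills in reasoning the authors leave implicit rather than reproducing or diverging from an argument given in the text. Your first paragraph is the correct and intended reading of the citation: BDG's Theorem~4.1 is precisely the $\ell^2 L^p$-decoupling for the moment curve, and specializing to an arbitrary sequence $(a_n)$ supported on the $1/H$-separated points $(n/H,\ldots,(n/H)^d)$, rescaling, and taking $p=2s\le d(d+1)$ gives exactly the displayed weighted bound; raising to the $2s$-th power yields the lemma. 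This handles arbitrary complex weights and non-integer $s$ in one step, which is why the lemma is phrased the way it is.

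Two remarks on your fallback via dyadic level sets. First, the sum over levels is not a geometric series in general: after Minkowski in $L^{2s}$ one has $\sum_{k}\bigl(\sum_{n\in S_k}|a_n|^2\bigr)^{1/2}$, which one controls by Cauchy--Schwarz across the $O(\log H)$ non-negligible levels (or by pigeonholing to a single dominant level), and it is this $(\log H)^{1/2}$ loss that is absorbed into $H^{o(1)}$. Second, and more substantively, the per-level step requires the sharp \emph{subset} count $J_{d,s}(S_k)\ll |S_k|^s H^{o(1)}$, which is the paper's Lemma~\ref{lem:MVT-GenSet}; this is itself derived \emph{from} the present lemma (by taking indicator weights), not from the unweighted full-interval bound $J_{d,s}(H)\ll H^{s+o(1)}$. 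So the dyadic route does not really bypass the decoupling input, as you correctly acknowledge in your final paragraph; the direct application in your first paragraph is both cleaner and logically necessary.
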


Let $\cX\subseteq [1,H]\cap \Z$ be an arbitrary set. For integers $d$ and $s$ we let $J_{k,s}(\cX)$ denote the number of solutions to the system of equations
$$
x_1^{j}+\ldots+x_{s}^{j}=x_{s+1}^{j}+\ldots+x_{2s}^{j}, \qquad 1\le j \le d,
$$
with variables satisfying
$$x_1,\ldots,x_{2s}\in \cX.$$

The following is an immediate corollary of Lemma~\ref{lem:vmvt}.

\begin{lemma}
\label{lem:MVT-GenSet}
For $s\le d(d+1)/2$, for and set $\cX\subseteq [1,H]\cap \Z$ of cardinality $X$
$$J_{d,s}(\cX)\ll  X^{s}H^{o(1)}.$$
\end{lemma}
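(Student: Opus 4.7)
The plan is to recognise $J_{d,s}(\cX)$ as the $2s$-th moment of an exponential sum whose coefficients are the indicator function of $\cX$, and then invoke Lemma~\ref{lem:vmvt} directly. First I would set
$$
a_n = \begin{cases} 1, & n \in \cX, \\ 0, & n \in [1,H]\cap \Z \setminus \cX,\end{cases}
$$
and $a_n = 0$ for $n \notin [1,H]$, so that $\sum_{|n|\le H}|a_n|^2 = X$.

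Next, by the standard orthogonality relation
$$
\int_{[0,1)^d} \e(\alpha_1 m + \ldots + \alpha_d m^d)\, d\alpha_1\ldots d\alpha_d = \begin{cases} 1, & m=0,\\ 0, & \text{otherwise,} \end{cases}
$$
applied coordinate-wise to the $d$ power-sum differences defining the system, I obtain the Fourier-analytic identity
$$
J_{d,s}(\cX) = \int_{[0,1)^d} \left|\sum_{|n|\le H} a_n \e(\alpha_1 n + \ldots + \alpha_d n^d)\right|^{2s} d\alpha_1 \ldots d\alpha_d.
$$
Since $s\le d(d+1)/2$, Lemma~\ref{lem:vmvt} applies and bounds the right-hand side by
$$
H^{o(1)}\left(\sum_{|n|\le H}|a_n|^2\right)^{s} = H^{o(1)} X^{s},
$$
which is exactly the claimed estimate.

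There is no genuine obstacle here: the lemma is presented as an immediate corollary of Lemma~\ref{lem:vmvt}, and the only subtlety is the bookkeeping step of verifying that the $2s$-th moment of the exponential sum with $0/1$ coefficients indexed by $\cX$ equals $J_{d,s}(\cX)$. This is routine orthogonality, so once the coefficient sequence $a_n$ is chosen appropriately, the bound follows in one line.
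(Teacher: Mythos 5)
Your proposal is correct and is exactly the argument the paper has in mind: the paper labels this an ``immediate corollary'' of Lemma~\ref{lem:vmvt}, and your choice of the indicator sequence $a_n$ together with orthogonality is precisely the routine step being elided. No discrepancies.
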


\subsection{Background on  geometry of numbers}
We next recall some facts from the geometry of numbers. Given a lattice $\cL \subseteq \R^{n}$ and a symmetric convex body $\cD\subseteq \R^{n}$, define the $i$-th successive minimum of $\cL$ with respect to $\cD$ by 
$$\lambda_{i}=\inf\{ \lambda :~\cL \cap \lambda \cD \  \text{contains $i$ linearly independent points} \}.$$

The following basis is  known as a Mahler basis, see~\cite[Theorem 3.34, Corollary 3.35]{TaoVu}.
\begin{lemma}
\label{lem:mahler}
Given a  lattice  $\cL\subseteq \R^{n}$  
and  a  convex body $\cB\subseteq \R^{n}$, let $\lambda_{1},\ldots,\lambda_{n}$ denote the successive minima of $\cL$ with respect to $\cB$.  There exists a basis $\vec{w}_1,\ldots,\vec{w}_n$ of $\cL$ such that 
$$\vec{w}_j\in \frac{n\lambda_j}{2}\cdot \cB,$$
and each $\vec{b}  \in \cL\cap \cB$ can be expressed in the form 
$$\vec{b}=b_1\vec{w}_1+\ldots+ b_n \vec{w}_n, \qquad  b_j\ll \frac{1}{\lambda_j}, \ j =1, \ldots, n.$$
\end{lemma}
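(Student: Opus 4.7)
The plan is to follow Mahler's classical two-step strategy from the geometry of numbers: build the basis $\vec{w}_1, \ldots, \vec{w}_n$ inductively from lattice vectors realising the successive minima, and then derive the coefficient bound for $\vec{b} \in \cL \cap \cB$ by an elementary inductive argument.

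For the construction, begin by choosing linearly independent $\vec{v}_1, \ldots, \vec{v}_n \in \cL$ with $\vec{v}_i \in \lambda_i \cB$, which exist by the very definition of the successive minima. These span a full-rank sublattice of $\cL$ but need not generate $\cL$ itself. Build $\vec{w}_1, \ldots, \vec{w}_n$ inductively so that $\vec{w}_1, \ldots, \vec{w}_k$ is a $\Z$-basis of $\cL_k := \cL \cap \mathrm{span}(\vec{v}_1, \ldots, \vec{v}_k)$ for every $k$. At step $k$, choose a lift in $\cL_k$ of a generator of the rank-one quotient $\cL_k/\cL_{k-1}$ of the form $c\vec{v}_k + \vec{u}$ with $|c| \leq 1$ and $\vec{u} \in \mathrm{span}(\vec{v}_1, \ldots, \vec{v}_{k-1})$, then reduce $\vec{u}$ modulo the previously constructed $\vec{w}_1, \ldots, \vec{w}_{k-1}$ so that its real-coefficient expansion lies in $[-1/2, 1/2]^{k-1}$. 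Symmetry and convexity of $\cB$, combined with $\lambda_j \leq \lambda_k$ for $j < k$ and the inductive size bounds on the earlier $\vec{w}_j$, then give $\vec{w}_k \in (n\lambda_k/2)\cB$ via the triangle inequality.

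For the coefficient bound, first establish the auxiliary claim that any $\vec{c} \in \cL \setminus \mathrm{span}(\vec{v}_1, \ldots, \vec{v}_{k-1})$ satisfies $\|\vec{c}\|_{\cB} \geq \lambda_k$: the vectors $\vec{v}_1, \ldots, \vec{v}_{k-1}, \vec{c}$ are $k$ linearly independent lattice elements, so their maximum $\cB$-norm is at least $\lambda_k$; a case analysis on whether $\lambda_{k-1} = \lambda_k$ together with a descent argument (terminating in the fact that every nonzero lattice element has $\cB$-norm at least $\lambda_1$) yields the claim. Applied to $\vec{c} = \vec{w}_k$ this in particular gives $\lambda_k \leq \|\vec{w}_k\|_\cB \leq n\lambda_k/2$. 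Now for $\vec{b} \in \cL \cap \cB$ with top nonzero coefficient at index $k$, the decomposition $b_k\vec{w}_k = \vec{b} - \sum_{j<k} b_j\vec{w}_j$ combined with the triangle inequality yields
$$
|b_k|\lambda_k \leq 1 + (n/2)\sum_{j<k} |b_j|\lambda_j ;
$$
setting $B_j := |b_j|\lambda_j$ and iterating this linear recurrence gives $B_k \ll 1$, that is, $|b_k| \ll 1/\lambda_k$, uniformly in $\vec{b}$.

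The main obstacle is the bookkeeping in the basis construction, in particular verifying that the inductive lift $\vec{w}_k$ simultaneously generates the correct rank-one quotient $\cL_k/\cL_{k-1}$ and admits a reduction into $(n\lambda_k/2)\cB$ while remaining compatible with later reduction steps. Once this is in place, both the auxiliary inequality $\|\vec{w}_k\|_\cB \geq \lambda_k$ and the final coefficient bound follow by the elementary recurrence above, with the implicit constant in $\ll$ growing only polynomially (or at worst geometrically) in the ambient dimension $n$.
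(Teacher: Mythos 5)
The paper itself gives no proof of this lemma; it simply cites Tao--Vu \cite[Theorem~3.34, Corollary~3.35]{TaoVu}. Your proposal attempts the classical Mahler construction, which is indeed the strategy behind the cited result, but two points in your sketch do not hold up as stated.

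\emph{The size bound on $\vec{w}_k$.} You reduce the correction term $\vec{u}$ modulo the previously constructed $\vec{w}_1,\ldots,\vec{w}_{k-1}$. Writing $\vec{w}_k=c\vec{v}_k+\sum_{j<k}\theta_j\vec{w}_j$ with $|c|\le 1$, $|\theta_j|\le 1/2$, the triangle inequality then gives $\|\vec{w}_k\|_{\cB}\le \lambda_k+\tfrac12\sum_{j<k}\|\vec{w}_j\|_{\cB}$. If $c_k$ denotes the constant in the inductive bound $\|\vec{w}_j\|_{\cB}\le c_j\lambda_j$, using $\lambda_j\le\lambda_k$ yields $c_k\le 1+\tfrac12\sum_{j<k}c_j$, whose solution is $c_k=(3/2)^{k-1}$, not $n/2$. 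To get the linear constant claimed in the lemma one should reduce modulo the minimum-achieving vectors $\vec{v}_1,\ldots,\vec{v}_{k-1}$ (all of which lie in $\cL_{k-1}$, so the reduction stays in the lattice): then $\|\vec{w}_k\|_{\cB}\le\lambda_k+\tfrac12\sum_{j<k}\lambda_j\le\tfrac{k+1}{2}\lambda_k$. You do acknowledge that the implied constant might only be geometric, and for the applications in this paper (where $\ll$ may depend on the dimension $d$) that would suffice; but it does not deliver the explicit $n\lambda_j/2$ in the statement.

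\emph{The coefficient bound.} This is the real gap. You deduce, for the top nonzero index $k$ of $\vec{b}$, that $|b_k|\lambda_k\le 1+\tfrac n2\sum_{j<k}|b_j|\lambda_j$, and then say that ``iterating this linear recurrence gives $B_k\ll 1$''. But the inequality bounds $B_k$ in terms of the unknowns $B_j$ for $j<k$; it is not an iteration that closes. There is no base case: for a single fixed $\vec{b}$ only the inequality for the top index is available, and the $B_j$ with $j<k$ are not bounded by it. Nor does peeling off the top term help: subtracting $b_k\vec{w}_k$ increases the $\cB$-norm by $|b_k|\,\|\vec{w}_k\|_{\cB}$, which is exactly the quantity you have not yet controlled; and writing the full set of inequalities $B_k\le 1+\tfrac n2\sum_{j\neq k}B_j$ for every $k$ and summing gives $\sum_k B_k\le n+\tfrac n2(n-1)\sum_k B_k$, which is vacuous once $n\ge 2$. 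A genuinely different idea is needed here; for instance one can pass to the dual basis $\vec{w}_1^*,\ldots,\vec{w}_n^*$ of $\vec{w}_1,\ldots,\vec{w}_n$, note $b_j=\langle \vec{w}_j^*,\vec{b}\rangle$ so that $|b_j|\le\|\vec{w}_j^*\|_{\cB^*}$, and then show $\|\vec{w}_j^*\|_{\cB^*}\ll 1/\lambda_j$ using the transference inequalities between the minima of $(\cL,\cB)$ and $(\cL^*,\cB^*)$.

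One smaller remark: your auxiliary claim that any $\vec{c}\in\cL\setminus\mathrm{span}(\vec{v}_1,\ldots,\vec{v}_{k-1})$ has $\|\vec{c}\|_{\cB}\ge\lambda_k$ is correct, but the ``case analysis and descent'' is vague. The clean argument is to take $\ell$ to be the largest index with $\lambda_\ell<\lambda_k$ (so $\ell\le k-1$ and $\lambda_{\ell+1}=\lambda_k$); then $\vec{v}_1,\ldots,\vec{v}_\ell,\vec{c}$ are $\ell+1$ independent lattice vectors, hence at least one of them has $\cB$-norm $\ge\lambda_{\ell+1}=\lambda_k$, and it must be $\vec{c}$ since all the $\vec{v}_j$ with $j\le\ell$ have norm strictly less than $\lambda_k$.
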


The following is Minkowski's second theorem, for a proof see~\cite[Theorem~3.30]{TaoVu}.

\begin{lemma}
\label{lem:mst}
Given a  lattice  $\cL\subseteq \R^{n}$  and  a symmetric convex body $\cD\subseteq \R^{n}$, let $\lambda_{1},\ldots,\lambda_{n}$ denote the successive minima of $\cL$ with respect to $\cD$. We have
$$\frac{\mu(\cD)}{\mu(\R^n/\cL)}\ll \frac{1}{\lambda_1\ldots\lambda_n}  \ll\frac{\mu(\cD)}{\mu(\R^n/\cL)},$$
where $\mu$ denotes the Lebesgue  measure.
\end{lemma}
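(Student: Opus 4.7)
This statement is Minkowski's second theorem, and the two inequalities have dramatically different difficulty. The right-hand inequality $1/(\lambda_1 \ldots \lambda_n) \ll \mu(\cD)/\mu(\R^n/\cL)$ is elementary, while the left-hand inequality carries the substantive content and will be the main obstacle.

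For the easy direction, my plan is to invoke the Mahler basis from Lemma~\ref{lem:mahler}. Let $\vec{w}_1, \ldots, \vec{w}_n$ be such a basis, satisfying $\vec{w}_i \in (n \lambda_i/2) \cdot \cD$. Since this is a lattice basis, $\mu(\R^n/\cL) = |\det(\vec{w}_1, \ldots, \vec{w}_n)| = \lambda_1 \ldots \lambda_n \cdot |\det(\vec{w}_1/\lambda_1, \ldots, \vec{w}_n/\lambda_n)|$. The rescaled vectors $\vec{w}_i/\lambda_i$ all lie in $(n/2) \cdot \cD$, so by convexity the $n$-simplex with vertices $0, \vec{w}_1/\lambda_1, \ldots, \vec{w}_n/\lambda_n$ is contained in $(n/2) \cdot \cD$. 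Comparing volumes gives $|\det(\vec{w}_1/\lambda_1, \ldots, \vec{w}_n/\lambda_n)| \le n!\,(n/2)^n \mu(\cD)$, which upon combination yields $\mu(\R^n/\cL) \ll \lambda_1 \ldots \lambda_n \mu(\cD)$, i.e.\ the claimed bound.

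For the hard direction $\mu(\cD)/\mu(\R^n/\cL) \ll 1/(\lambda_1 \ldots \lambda_n)$, the main obstacle is that a naive parallelepiped comparison fails: the linearly independent vectors $\vec{v}_i \in \lambda_i \cdot \cD$ realising the successive minima may be nearly collinear, producing a determinant much smaller than $\mu(\cD)$. To circumvent this, I would follow the classical proof of Minkowski's second theorem, inducting along the flag $T_j = \mathrm{span}(\vec{v}_1, \ldots, \vec{v}_j)$. At each level one argues that translates of $(\lambda_j/2) \cdot \cD$ by distinct cosets of $\cL \cap T_{j-1}$ inside $\cL \cap T_j$ cannot overlap, since an overlap would produce a lattice vector of $\cD$-norm strictly below $\lambda_j$ but transverse to $T_{j-1}$, contradicting the definition of $\lambda_j$. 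Projecting onto the quotient $T_n/T_{j-1}$ converts these non-overlap statements into volume estimates, and combining them across all $j$ delivers $\mu(\cD) \lambda_1 \ldots \lambda_n \ll \mu(\R^n/\cL)$. Since the paper simply cites the classical result to~\cite[Theorem~3.30]{TaoVu}, in practice I would appeal to that reference rather than reproducing the packing argument in detail.
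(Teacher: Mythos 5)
The paper states Lemma~\ref{lem:mst} as a citation to~\cite[Theorem~3.30]{TaoVu} without supplying its own proof, and you ultimately defer to the same reference, so the approaches coincide. Your elementary derivation of the easier inequality $1/(\lambda_1\cdots\lambda_n)\ll \mu(\cD)/\mu(\R^n/\cL)$ via the Mahler basis and a simplex volume comparison is correct, and your sketch of the flag-packing argument for the harder direction is a reasonable summary of the classical proof.
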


 We may bound the number of lattice points $\#\(\cL \cap \cD\)$ in terms of the successive minima, see for example~\cite[Exercise~3.5.6]{TaoVu}.
 
\begin{lemma}
\label{lem:lattice}
Given a  lattice  $\cL\subseteq \R^{n}$  
and  a symmetric convex body $\cD\subseteq \R^{n}$, let $\lambda_{1},\ldots,\lambda_{n}$ denote the successive minima of $\cL$ with respect to $\cD$. We have
$$\#\(\cL \cap \cD\)\ll \prod_{j=1}^{n}\max\left\{1, \frac{1}{\lambda_j} \right\}.$$
\end{lemma}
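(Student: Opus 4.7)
The statement is a direct counting consequence of the Mahler basis already recorded in Lemma~\ref{lem:mahler}, so the argument should be short and essentially bookkeeping. My plan is to invoke Lemma~\ref{lem:mahler} to obtain a basis $\vec{w}_1,\ldots,\vec{w}_n$ of $\cL$ adapted to $\cD$ with the property that every lattice point $\vec{b}\in\cL\cap\cD$ admits a unique representation
$$
\vec{b}=b_1\vec{w}_1+\ldots+b_n\vec{w}_n, \qquad b_j\in\Z,\quad |b_j|\ll 1/\lambda_j,
$$
(uniqueness being automatic because $\vec{w}_1,\ldots,\vec{w}_n$ is a basis). This reduces the counting of $\#(\cL\cap\cD)$ to counting admissible integer tuples $(b_1,\ldots,b_n)$.

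The next step is to count coordinate by coordinate. For each $j$ the integer $b_j$ lies in an interval of length $O(1/\lambda_j)$, so it assumes at most
$$
O\!\left(1+\frac{1}{\lambda_j}\right)=O\!\left(\max\!\left\{1,\frac{1}{\lambda_j}\right\}\right)
$$
values. Multiplying over $j=1,\ldots,n$ and using the uniqueness of the Mahler representation then yields
$$
\#(\cL\cap\cD)\ll \prod_{j=1}^{n}\max\!\left\{1,\frac{1}{\lambda_j}\right\},
$$
which is exactly the asserted inequality.

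The only mild subtlety — and the step I would flag rather than call a genuine obstacle — concerns indices $j$ with $\lambda_j>1$: there the bound $|b_j|\ll 1/\lambda_j$ might naively appear to forbid any integer values. In fact $b_j=0$ is always admissible, and by the definition of the successive minima the lattice points lying in $\cD$ are forced into the sublattice spanned by the first $j-1$ basis vectors; hence the correct contribution of such an index is exactly $1$, which is what the ``$\max$'' with $1$ records. Since all real content of the argument has been absorbed into Lemma~\ref{lem:mahler}, I do not anticipate a substantive obstacle beyond stating this bookkeeping carefully.
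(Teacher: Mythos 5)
Your proof is correct and follows exactly the approach the paper implicitly intends: the paper states Lemma~\ref{lem:lattice} by reference to~\cite[Exercise~3.5.6]{TaoVu} and does not include its own argument, and the natural route—deducing the count from the Mahler basis of Lemma~\ref{lem:mahler} by observing that each integer coordinate $b_j$ takes $O(1+1/\lambda_j)=O(\max\{1,1/\lambda_j\})$ values, with uniqueness of the representation coming from $\vec{w}_1,\ldots,\vec{w}_n$ being a basis—is precisely what you do. One small remark: the aside about lattice points being "forced into the sublattice spanned by the first $j-1$ basis vectors" when $\lambda_j>1$ is not quite what the definition of successive minima gives (it controls the \emph{dimension} of the span, not the particular sublattice), and in any case it is unnecessary; the clean statement is simply that the implied constant in $|b_j|\ll 1/\lambda_j$ is some $c=c(n)$, so $b_j$ ranges over at most $2c/\lambda_j+1\le (2c+1)\max\{1,1/\lambda_j\}$ integers, which already yields the bound after taking the product over $j$.
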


Define the dual lattice $\cL^{*}$ by
\begin{equation}
\label{eq:Gammastar}
\cL^{*}=\{ y\in \R^{n}:~\langle y,z\rangle\in \Z \ \text{for all $z\in \cL$}\},
\end{equation}
and the dual body $\cD^{*}$ by
\begin{equation}
\label{eq:Dstar}
\cD^{*}=\{ y \in \R^{n} :~ \langle y,z \rangle \le 1 \ \text{for all $z\in \cD$} \},
\end{equation}
where $\langle   \cdot  ,  \cdot   \rangle$ denotes the Euclidean inner product. 

The successive minima of a lattice with respect to a convex body and the successive minima of the dual lattice with respect to the dual body are related through the following estimates, see for example~\cite{Ba}.

\begin{lemma}
\label{lem:transfer}
Given a  lattice  $\cL\subseteq \R^{n}$  
and  a symmetric convex body $\cD\subseteq \R^{n}$,  let $\lambda_{1},\ldots,\lambda_{n}$ denote the successive minima of $\cL$ with respect to $\cD$ and  let $\lambda_1^{*},\ldots,\lambda_n^{*}$ denote the successive minima of the dual lattice $\cL^{*}$ with respect to the dual body $\cD^{*}$.  For each $1\le j \le n$ we have
$$\lambda_j \lambda^*_{n-j+1}\ll 1.$$
\end{lemma}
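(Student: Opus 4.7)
The plan is to establish the classical Mahler-style transference bound in three steps: a dimension-count lower bound $\lambda_j \lambda^*_{n-j+1}\ge 1$, an upper bound on the full product $\prod_{j=1}^{n}\lambda_j \lambda^*_{n-j+1}\ll 1$ via two applications of Minkowski's second theorem, and the combination of these two.

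For the lower bound, by definition of the successive minima I pick linearly independent vectors $\vec v_1,\dots,\vec v_j \in \cL \cap \lambda_j \cD$ and linearly independent $\vec w_1,\dots,\vec w_{n-j+1} \in \cL^* \cap \lambda^*_{n-j+1} \cD^*$. The orthogonal complement of $\operatorname{span}(\vec v_1,\dots,\vec v_j)$ has dimension $n-j$, so not all of the $n-j+1$ vectors $\vec w_k$ can lie in it, and there must exist indices $i,k$ with $\langle \vec v_i,\vec w_k\rangle\ne 0$. By the definition~\eqref{eq:Gammastar} of $\cL^*$ this inner product is an integer, so $|\langle \vec v_i,\vec w_k\rangle|\ge 1$. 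Writing $\vec v_i=\lambda_j \vec d_i$ and $\vec w_k = \lambda^*_{n-j+1}\vec e_k$ with $\vec d_i\in\cD$ and $\vec e_k\in\cD^*$, the definition~\eqref{eq:Dstar} of $\cD^*$ together with the symmetry of $\cD$ (which forces symmetry of $\cD^*$) gives $|\langle\vec v_i,\vec w_k\rangle|\le \lambda_j\lambda^*_{n-j+1}$, establishing $\lambda_j \lambda^*_{n-j+1}\ge 1$ for every $j$.

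For the product bound, I apply Lemma~\ref{lem:mst} once to $(\cL,\cD)$ and once to $(\cL^*,\cD^*)$ and multiply; using the standard identity $\mu(\R^n/\cL)\mu(\R^n/\cL^*)=1$ for dual lattices together with the classical symmetric volume product bound $\mu(\cD)\mu(\cD^*)\gg 1$ (with constant depending only on $n$) one obtains
$$\prod_{j=1}^{n}\lambda_j \lambda^*_{n-j+1} \;=\; \prod_{j=1}^{n}\lambda_j \,\cdot\, \prod_{j=1}^{n}\lambda^*_{j}  \;\asymp\; \frac{1}{\mu(\cD)\mu(\cD^*)} \;\ll\; 1.$$
Combined with $\lambda_j \lambda^*_{n-j+1}\ge 1$ from the previous step, every individual factor of this product is forced to be $\ll 1$, yielding the claim.

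The only non-routine ingredient is the volume product inequality $\mu(\cD)\mu(\cD^*)\gg 1$ for a symmetric convex body, which is the main obstacle and the reason the implied constant depends on $n$; one would either cite it (for example from~\cite{Ba}) or supply the short inscribed cross-polytope argument that gives the explicit constant $4^n/n!$.
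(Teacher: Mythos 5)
Your proof is correct, but it proceeds rather differently from the paper, which offers no proof at all: Lemma~\ref{lem:transfer} is simply quoted with a pointer to Banaszczyk~\cite{Ba}. Banaszczyk's argument uses discrete Gaussian measures and yields the much sharper transference constant $\lambda_j\lambda^*_{n-j+1}\ll n$ (and, for general symmetric bodies, a constant polynomial in $n$). What you have reconstructed is instead the classical Mahler transference argument from the 1930s: the dimension-count/integrality pairing for the lower bound $\lambda_j\lambda^*_{n-j+1}\ge 1$, two applications of Minkowski's second theorem (Lemma~\ref{lem:mst}) together with $\mu(\R^n/\cL)\,\mu(\R^n/\cL^*)=1$ for the product bound, and the elementary Mahler volume-product inequality $\mu(\cD)\mu(\cD^*)\gg_n 1$. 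This gives an implied constant that is exponential in $n$, which is weaker than Banaszczyk's, but entirely sufficient here since the paper only ever applies the lemma with $n=d$ or $n=2d$ fixed, and all implied constants are permitted to depend on $d$. Your route has the advantage of being self-contained given the tools the paper already imports (Lemma~\ref{lem:mst}) plus the one extra elementary fact $\mu(\cD)\mu(\cD^*)\gg 1$. Two small technical remarks: in the lower-bound step you pick vectors in $\cL\cap\lambda_j\cD$ and $\cL^*\cap\lambda^*_{n-j+1}\cD^*$, which tacitly uses that the successive minima are attained; this holds for compact $\cD$ (which covers all uses in the paper), or else one replaces $\lambda_j$ by $(1+\eps)\lambda_j$ and lets $\eps\to 0$. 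Also the bound $|\langle v_i,w_k\rangle|\le\lambda_j\lambda^*_{n-j+1}$ does require, as you note, the symmetry of $\cD$ to upgrade the one-sided inequality in the definition~\eqref{eq:Dstar} of $\cD^*$ to a two-sided one; that step is fine.
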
 


We also require the following generalisation of Siegel's Lemma due to Bombieri and Vaaler~\cite[Theorem~2]{BomVaa}.
\begin{lemma}\label{lem:BomVaa}
    Let $\sum_{i=1}^d v_{j, i}w_i = 0$, $j=1,\ldots , d_0$ be a system of $d_0$ linearly independent equations in $d>d_0$ unknowns, with rational integer coefficients $v_{j,i}$. Then, there are $d-d_0$ linearly independent integral solutions $\tilde w_j=(w_{j,1},\ldots ,w_{j,d})$, $1\leq j\leq d-d_0$, with
    \[
    \prod_{j=1}^{d-d_0}\max_i |w_{j,i}|\leq \left(D^{-1} \sqrt{|\det(M M^T)|}\right)^{1/(d-d_0)},
    \]
    where $M$ denotes the $d_0\times d$ matrix $M=(v_{j,i})$ and $D$ is the greatest common divisor of determinants of all $d_0\times d_0$ minors of $M$.
\end{lemma}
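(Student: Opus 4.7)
The plan is to derive Lemma~\ref{lem:BomVaa} by combining Minkowski's theorem on successive minima with Vaaler's cube slicing inequality. Set
$$\cL = \{w \in \Z^d :~ Mw = 0\}.$$
Since the rows of $M$ are linearly independent, $\cL$ has rank $n := d - d_0$ and sits as a full-rank lattice inside the $n$-dimensional real subspace $V := \cL \otimes \R \subseteq \R^d$. I would take $\cD = [-1, 1]^d$ as the symmetric convex body and write $\det_V(\cL)$ and $\mu_V$ for the $n$-dimensional covolume and Lebesgue measure in $V$.

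First I would compute $\det_V(\cL)$. Putting $M$ into Smith normal form and using the Cauchy--Binet formula, a standard calculation yields
$$\det_V(\cL) = \frac{\sqrt{|\det(MM^T)|}}{D}.$$
Here $D$ appears naturally as the content of the integer lattice cut out by $M$, while $\sqrt{|\det(MM^T)|}$ records the real covolume of this subspace ignoring integrality.

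Next, the key geometric input is Vaaler's cube slicing inequality, which guarantees $\mu_V(\cD \cap V) \ge 2^n$ for every $n$-dimensional subspace $V$ of $\R^d$. Feeding this into the sharp form of Minkowski's second theorem (a quantitative refinement of Lemma~\ref{lem:mst} with the explicit constant $2^n$),
$$\prod_{j=1}^n \lambda_j \cdot \mu_V(\cD \cap V) \le 2^n \det{}_V(\cL),$$
one obtains
$$\prod_{j=1}^n \lambda_j \le D^{-1} \sqrt{|\det(MM^T)|},$$
where $\lambda_1 \le \ldots \le \lambda_n$ are the successive minima of $\cL$ with respect to $\cD \cap V$ inside $V$.

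Finally, by the very definition of the $\lambda_j$ one can pick, inductively, a lattice point $\tilde w_j \in \cL \cap (\lambda_j \cdot \cD)$ for each $j$ such that $\tilde w_1, \ldots, \tilde w_n$ are linearly independent; this route avoids the $(n/2)^n$ loss that would arise from invoking Mahler's basis (Lemma~\ref{lem:mahler}), which is why one should not pass through a basis here. Each such $\tilde w_j = (w_{j,1}, \ldots, w_{j,d})$ then satisfies $\max_i |w_{j,i}| \le \lambda_j$, and multiplying over $j$ delivers the claimed bound. The main obstacle is Vaaler's cube slicing inequality itself; it lies outside the toolkit assembled in Subsection~2.2 and is precisely what eliminates the dimension-dependent constants that any crude volume estimate of $\cD \cap V$ (for example via John's theorem) would introduce, and without it one would only obtain the bound up to an extra factor depending on $n$.
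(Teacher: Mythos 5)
The paper does not prove Lemma~\ref{lem:BomVaa} at all; it simply quotes it as Theorem~2 of Bombieri and Vaaler~\cite{BomVaa}, so there is no in-paper proof to compare against. Your argument is correct and is, in substance, the original Bombieri--Vaaler proof: identify the solution set with the kernel lattice $\cL=\Z^d\cap V$, compute its covolume $\det_V(\cL)=D^{-1}\sqrt{|\det(MM^T)|}$ via Smith normal form and Cauchy--Binet, then feed Vaaler's cube slicing theorem ($\mu_V([-1,1]^d\cap V)\ge 2^n$) into the sharp upper bound $\lambda_1\cdots\lambda_n\,\mu_V(\cD\cap V)\le 2^n\det_V(\cL)$ of Minkowski's second theorem so that the $2^n$ factors cancel exactly, and finally select linearly independent lattice vectors directly realising the successive minima. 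Your remark that one must pick representatives of the successive minima rather than pass through a Mahler-type basis (Lemma~\ref{lem:mahler}) is the right observation: a reduced basis would introduce a factor of order $n^n$ and destroy the clean bound. The one substantial external input is Vaaler's cube slicing inequality itself, which is not among the geometry-of-numbers tools assembled in Subsection~2.2 — that is precisely why the constant-free bound holds, and presumably why the authors cite the lemma rather than rederive it.
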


\subsection{Additive equations with polynomials}

\begin{lemma}
\label{lem:poly-eqn}
Let $f\in \Z[X]$ be a polynomial of degree $d\ge 2$. For any $w\neq 0$, the number of solutions to the equation
\begin{equation}
\label{eq:fnm}
 f(n)-f(m)=w, \quad 1\le n,m\le H,
\end{equation}
is bounded by $w^{o(1)}$.
\end{lemma}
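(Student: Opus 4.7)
The plan is to exploit the polynomial identity $(X-Y) \mid f(X) - f(Y)$ in $\Z[X,Y]$ together with the standard divisor bound. Write
\[
f(X) - f(Y) = (X-Y)\,h(X,Y),
\]
where $h \in \Z[X,Y]$ has total degree $d-1$. Any solution of~\eqref{eq:fnm} then satisfies
\[
(n-m)\,h(n,m) = w,
\]
and since $w \neq 0$, both factors are nonzero integers dividing $w$. In particular the difference $\delta := n - m$ is a (signed) divisor of $w$, so there are at most $2\tau(|w|)$ choices for $\delta$.

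For each such $\delta \neq 0$, I would substitute $m = n - \delta$ into $f(n) - f(n-\delta) = w$. The expansion
\[
f(n) - f(n-\delta) = d\,a_d\,\delta\, n^{d-1} + \text{(lower-order terms in $n$)}
\]
shows that the resulting equation in $n$ is polynomial of exact degree $d-1 \ge 1$ (here we use $d \ge 2$ and $a_d \neq 0$, so the leading coefficient $d a_d \delta$ is nonzero). Hence for each fixed $\delta$ there are at most $d-1$ integer solutions $n$, and correspondingly at most $d-1$ pairs $(n,m)$.

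Combining the two steps, the total number of solutions is at most $2(d-1)\tau(|w|) = w^{o(1)}$ by the classical divisor bound $\tau(|w|) \le |w|^{o(1)}$. There is no real obstacle: the only subtlety is verifying that $h(n, n-\delta)$ does not identically vanish as a polynomial in $n$, which is immediate from the leading-term calculation above. The hypotheses $1 \le n, m \le H$ play no role beyond ensuring integrality; the bound is purely a consequence of the algebraic factorisation and the divisor estimate.
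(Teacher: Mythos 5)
Your proof is correct and follows essentially the same route as the paper: factor $f(X)-f(Y)=(X-Y)h(X,Y)$, observe that $n-m$ must be a divisor of $w$, and for each of the $w^{o(1)}$ choices of $\delta=n-m$ reduce to a nondegenerate univariate polynomial equation with $O(1)$ roots. Your version is marginally streamlined (you fix only the divisor $\delta$ rather than a full factorization $d_1d_2=w$, and you compute the leading coefficient $d\,a_d\,\delta$ explicitly to justify the non-vanishing), but the idea is identical.
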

\begin{proof}
By Taylor's formula, it follows that
$$f(x)-f(y)=(x-y)g(x,y),$$
where for any $t$, the polynomial $G_t(x)=g(x,x+t)$ is not constant.

For any solution $n,m$ to~\eqref{eq:fnm}, there exists $d_1,d_2$ satisfying 
$$d_1d_2=w,$$ 
and 
\begin{equation}
\label{eq:nm1}
n-m=d_1, \quad g(n,m)=d_2.
\end{equation}
Let $R$ count the number of solutions to~\eqref{eq:fnm}. The well known bound on the divisor function (see, for example,~\cite[Equation~(1.81)]{IwKow}) implies that
$$R\le w^{o(1)}\max_{d_1,d_2\in \Z}\#\{ 1\le n,m\le H :~ n-m=d_1, \ g(n,m)=d_2\}.$$
If $n,m$ satisfy~\eqref{eq:nm1}, then 
$$g(d_1+m,m)=d_2,$$
for which there are $O(1)$ solutions in variable $m$. For each such $m$, there exists at most one solution in remaining variable $n$. This establishes the desired result.
\end{proof}

We now immediately derive the following.

\begin{cor}
\label{cor:poly-eqn}
Let $f\in \Z[X]$ be a polynomial of degree $d\ge 2$ with coefficients bounded by $X$. For any $H\ge 1$, we have  
$$
\#\{ 1\le x,y,z,w\le H : ~f(x)+f(y)=f(z)+f(w)\}\le H^2 (HX)^{o(1)}.
$$
\end{cor}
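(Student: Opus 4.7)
The plan is to rewrite the equation $f(x)+f(y)=f(z)+f(w)$ in the form $f(x)-f(w)=f(z)-f(y)$, introduce the common value $t:=f(x)-f(w)=f(z)-f(y)$, and split the count according to whether $t=0$ or $t\ne 0$.

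For the diagonal contribution $t=0$, one has $f(x)=f(w)$ and independently $f(z)=f(y)$. Since $f$ has degree $d$, for each fixed $x$ there are at most $d$ values of $w\in [1,H]$ with $f(w)=f(x)$, so the number of pairs $(x,w)$ satisfying $f(x)=f(w)$ is $O(H)$. The same holds for $(z,y)$, contributing $O(H^2)$ to the total count, which is absorbed by the claimed bound.

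For the off-diagonal contribution, I would estimate
$$
N_0 \;:=\; \sum_{t\ne 0}\#\{(x,w)\in [1,H]^2:~f(x)-f(w)=t\}\cdot \#\{(z,y)\in [1,H]^2:~f(z)-f(y)=t\}.
$$
Since the coefficients of $f$ are bounded by $X$, only values with $|t|\le 2\max_{1\le u\le H}|f(u)|\ll XH^d$ can occur. Applying Lemma~\ref{lem:poly-eqn} to one of the two factors gives a uniform bound $|t|^{o(1)}\le (XH^d)^{o(1)}=(XH)^{o(1)}$ on that factor. The remaining sum telescopes to the trivial count
$$
\sum_{t\ne 0}\#\{(z,y)\in [1,H]^2:~f(z)-f(y)=t\}\;\le\; H^2,
$$
and so $N_0\le H^2(XH)^{o(1)}$.

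Adding the diagonal and off-diagonal contributions yields the claimed bound $H^2(HX)^{o(1)}$. There is no substantive obstacle here: once Lemma~\ref{lem:poly-eqn} is in hand, the corollary follows by the standard device of fixing the common difference $t$, bounding one factor by the divisor-type estimate and summing the other factor trivially. The only point to watch is that $|t|$ is polynomial in both $X$ and $H$, so the factor $|t|^{o(1)}$ is indeed of size $(XH)^{o(1)}$.
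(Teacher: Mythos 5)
Your proof is correct and follows essentially the same route as the paper: both split on whether the common difference $t=f(x)-f(w)=f(z)-f(y)$ vanishes, bound the diagonal by $O(H^2)$, and for $t\ne 0$ bound one representation count by $(XH)^{o(1)}$ via Lemma~\ref{lem:poly-eqn} (using $|t|\ll XH^d$) while summing the other trivially to $\le H^2$.
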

\begin{proof}
The result follows from Lemma~\ref{lem:poly-eqn} after noting 
\begin{align*}
\#\{ 1\le x,y,z,w\le H:~f(x)+f(y)=f(z)+f(w)\}& \\
\ll H^2&+\sum_{w\neq 0}r(w)^2,
\end{align*} 
where 
$$r(w)=\#\{ 1\le x,y\le H:~f(x)-f(y)=w\}.$$
In particular, since if $r(w)\neq 0$ then $|w|\ll XH^{d}$ and hence by  Lemma~\ref{lem:poly-eqn}
$$
\sum_{w\neq 0}r(w)^2\le (XH)^{o(1)}\sum_{w\neq 0}r(w)\le H^2 (XH)^{o(1)}.
$$
\end{proof}

We next prove an analogue of  Lemma~\ref{lem:poly-eqn} for congruences. 

\begin{lemma}
\label{lem:short}
Let $m$ be an integer, $f$ a polynomial of degree $d\ge 2$ with leading coefficient $a_d$ coprime to $m$. There exists a constant $c_d$ depending only on $d$ such that if 
\begin{equation}
\label{eq:short-H}
H\le c_d m^{2/d(d+1)}
\end{equation}
 then  for any $\lambda \not \equiv 0 \mod{m}$ the number of solutions to 
\begin{equation}
\label{eq:cong11}
f(n)-f(m)\equiv \lambda \mod{m}, \quad 1\le m,n\le H
\end{equation}
is bounded by $H^{o(1)}$.
\end{lemma}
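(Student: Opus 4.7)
The plan is to upgrade the congruence $f(n) - f(\ell) \equiv \lambda \pmod{m}$ (writing $\ell$ for the second variable to avoid clashing with the modulus $m$) to a controlled number of integer equations $f(n) - f(\ell) = w$ with $w \in \lambda + m\Z$, and then apply Lemma~\ref{lem:poly-eqn} to each one. Crucially, one has $w \ne 0$ because $\lambda \not\equiv 0 \pmod m$ and, in the range of $H$ allowed, $|w| \le m H^d \le H^{O(1)}$, so the divisor bound supplied by Lemma~\ref{lem:poly-eqn} is of size $|w|^{o(1)} = H^{o(1)}$ for each such equation.

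The main ingredient is the geometry of numbers applied to the lattice
$$
\cL = \{ \vec v \in \Z^d :~ \textstyle\sum_{j=1}^d a_j v_j \equiv 0 \pmod{m} \}
$$
of covolume $m$ (using $\gcd(a_d, m) = 1$), together with the symmetric convex body $\cD = \{ \vec v \in \R^d :~ |v_j| \le H^j \}$ of volume $\asymp H^{d(d+1)/2}$. Every solution $(n, \ell)$ yields a tuple $\vec v(n, \ell) = (n^j - \ell^j)_{j=1}^d$ that lies in $\cD$ and in a fixed coset of $\cL$, and the map $(n, \ell) \mapsto \vec v(n, \ell)$ is injective for $n \ne \ell$ (which is forced by $\lambda \ne 0$). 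Under the hypothesis $H \le c_d m^{2/d(d+1)}$ with $c_d$ small enough, Minkowski's second theorem (Lemma~\ref{lem:mst}) gives $\prod_j \lambda_j \gg c_d^{-d(d+1)/2}$ for the successive minima of $\cL$ with respect to $\cD$. I would use a Mahler basis (Lemma~\ref{lem:mahler}), combined with the transference inequalities (Lemma~\ref{lem:transfer}), to parameterise lattice points of $\cL$ inside $\cD$, and apply Lemma~\ref{lem:BomVaa} to the auxiliary relation $a_1 b_1 + \cdots + a_d b_d + m b_0 = 0$ in the $d+1$ unknowns $b_0, \ldots, b_d$ to produce small-height integer witnesses of the congruence structure. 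These in turn furnish a congruent polynomial $\widetilde f \in \Z[X]$ of degree $\le d$ for which $\widetilde f(n) - \widetilde f(\ell)$ stays in a range of size $m \cdot H^{o(1)}$ as $(n, \ell)$ varies over $\cI^2$.

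The hard part of the argument, and where the lattice bookkeeping concentrates, will be to show that only $H^{o(1)}$ distinct integer values $w = \widetilde f(n) - \widetilde f(\ell)$ arise from actual solutions. This is really a lattice-points-on-a-variety question: the tuples $\vec v(n, \ell)$ are confined to the $2$-dimensional algebraic surface $V = \Phi(\cI^2) \subset \R^d$, where $\Phi(n, \ell) = (n^j - \ell^j)_j$, and it is the coset constraint from $\cL$ combined with the scaling $H^{d(d+1)/2} \le c_d^{d(d+1)/2} m$ from the range of $H$ that prevents $V$ from intersecting the coset in too many points. This is precisely the step that breaks for $H$ beyond the Vinogradov-type threshold $m^{2/d(d+1)}$ and forces the constant $c_d$ to be chosen carefully.

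Once that step is in place, for each of the $H^{o(1)}$ values of $w$ the equation $\widetilde f(n) - \widetilde f(\ell) = w$ contributes, by Lemma~\ref{lem:poly-eqn}, at most $|w|^{o(1)} \le (m H^d)^{o(1)} = H^{o(1)}$ solutions (using $m \le c_d^{-d(d+1)/2} H^{d(d+1)/2}$, hence $m H^d \le H^{O(1)}$). Multiplying the two $H^{o(1)}$ estimates yields the claimed bound on the number of solutions of the congruence.
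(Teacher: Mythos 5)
Your overall scheme — dualize via geometry of numbers, descend from the congruence to a genuine integer equation $\widetilde f(n)-\widetilde f(\ell)=w$ with small coefficients, and then invoke Lemma~\ref{lem:poly-eqn} — is the right circle of ideas and is close in spirit to the paper's argument. But the proposal has a fatal quantitative error at the very last step, and it is precisely the error that the paper's additional Bombieri--Vaaler argument is designed to avoid.

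You write ``using $m \le c_d^{-d(d+1)/2} H^{d(d+1)/2}$, hence $mH^d \le H^{O(1)}$.'' This inverts the hypothesis: $H \le c_d m^{2/d(d+1)}$ is an \emph{upper} bound on $H$ in terms of $m$, equivalently a \emph{lower} bound $m \ge c_d^{-d(d+1)/2}H^{d(d+1)/2}$ on $m$. There is no upper bound on $m$ in terms of $H$; for fixed $H$ the modulus can be arbitrarily large. Consequently the value $w$ that you obtain from the first dualization step satisfies only $|w| \ll m$, and Lemma~\ref{lem:poly-eqn} gives $w^{o(1)} = m^{o(1)}$, which is \emph{not} $H^{o(1)}$ in general. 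This is why the paper cannot stop after producing the equation $\sum_j b_j(n^j - m^j) = w$ from the small Minkowski vector $(b_1,\ldots,b_d)$: the resulting $w$ is only $O(m)$. The paper's second stage — consider the solution set $\cM$, pass to the $\Q$-span of the translated points $(n^i-m^i-(n_0^i-m_0^i))_i$, note it lies in a hyperplane since it is orthogonal to $(b_1,\ldots,b_d)$, apply Bombieri--Vaaler (Lemma~\ref{lem:BomVaa}) to the orthogonal complement to manufacture a \emph{new} relation $\sum_i w_{j_0,i}(n^i-m^i)=w^*$ with coefficients \emph{and} right-hand side all of size $H^{O(1)}$ — is exactly what replaces the bad factor $m^{o(1)}$ by $H^{o(1)}$. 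Your proposal uses Bombieri--Vaaler only in the first stage (to the relation $\sum a_jb_j + mb_0 = 0$, essentially duplicating the Minkowski step) and does not perform this second, decisive refinement.

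Relatedly, the ``hard part'' you flag — showing only $H^{o(1)}$ distinct values of $w$ occur — is misidentified. Once $|b_j| \ll m/(100dH^j)$, the integer $w = \sum_j b_j(n^j-\ell^j)$ is confined to an interval of length $\ll m$ and is congruent to a fixed residue $\rmod m$, hence is already determined up to $O(1)$ values; that part is essentially free. The genuine difficulty, not addressed in your writeup, is (a) obtaining a polynomial relation whose coefficients and right-hand side are of polynomial size \emph{in $H$}, and (b) handling the degenerate case where the resulting polynomial is linear, which the paper treats separately at the end (substituting $n = m + w^{**}$ back into $\sum_j b_j(n^j - m^j) = w$ and using $b_d \ne 0$). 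Without both of these, the proof does not close.
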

\begin{proof}
Let $W$ count the number of solutions to~\eqref{eq:cong11}.

Suppose $f$ is given by 
$$f(x)=\sum_{j=1}^{d}a_j x^{j}.$$
Since $(a_d,m)=1$, by modifying $\lambda$ if necessary we may assume that $a_d=1$. 

 Define the lattice 
$$\cL=\{ (n_1,\ldots ,n_d)\in \Z^{d}:~\exists  \ell \  \text{such that} \  a_j\ell \equiv n_j \mod m \ 1\le j \le d\},$$
and convex body 
$$\cB=\left\{ (x_1,\ldots ,x_d)\in \R^{d} :~|x_j|\le \frac{m}{100 d H^{j}}\right\}.$$
 By Lemma~\ref{lem:mst} and the condition~\eqref{eq:short-H} 
$$
\cL\cap \cB \neq \{0\}.
$$
Let $(b_1,\ldots ,b_d)\in \cL\cap \cB/\{0\}$ and choose $\ell\not \equiv 0 \mod{m}$ to satisfy 
\begin{equation}
\label{eq:123}
a_j\ell \equiv b_j \mod{m}, \quad 1\le j\le d.
\end{equation}
Multiplying the congruence~\eqref{eq:cong11} by $\ell$, using~\eqref{eq:123} and the fact that $(b_1,\ldots ,b_d)\in \cB$, we see that there exists an integer $w$ such that $W$ is bounded by the number of solutions to 
\begin{equation}
\begin{split}
\label{eq:bbh-b5d=}
\sum_{j=1}^{d}b_j\(n^{j}-m^{j}\)=w & \mand  f(m)\not \equiv f(n) \mod{m},
\end{split} 
\end{equation}
with variables satisfying $1\le m,n\le H$.
Note the condition $f(m)\not \equiv f(n) \mod{m}$ follows from~\eqref{eq:cong11} and the fact that $\lambda \not \equiv 0 \mod{m}$. If $W\neq 0$ then we may assume $w\neq 0$, since if $W\neq 0$ and $w=0$ then from~\eqref{eq:bbh-b5d=}, there exists some pair $(m,n)$ satisfying
$$
0\equiv \sum_{j=1}^{d}b_j(n^{j}-m^{j}) \equiv f(n)-f(m) \not \equiv 0 \mod{m},
$$
giving a contradiction. 

Define 
$$\cM=\{ (n,m):~\text{\eqref{eq:bbh-b5d=} holds}\}.$$
Fix some $(n_0,m_0)\in \cM$ and consider
$$
\cM^{*}=\left\{ \(n^i-m^i-(n_0^i-m_0^i\)_{i=1}^d \in \Z^{d} :~(n,m)\in \cM\right  \}.
$$
Let $\cV$ denote the vector space over $\Q$ generated by $\cM^{*}$.  Note that
$$
\dim \cV<d,
$$
since every point of $\cV$ is orthogonal to $(b_1,\ldots ,b_d)$. Define $d_0$ by 
$$\dim \cV=d_0.$$  For each $1\le j \le d_0$ choose nonzero points 
$$\tilde{v}_j= \(n_j^i-m_j^i-(n_0^i-m_0^i\)_{i=1}^d\in \cM^{*},$$
such that $\tilde{v}_1,\ldots ,\tilde{v}_{d_0}$ are linearly independent over $\Q$ and generate $\cV$ as a vector space. 
Let $M$ denote the matrix whose columns are $\tilde{v}_1,\ldots ,\tilde{v}_{d_0}$ and consider the orthogonal complement
$$
\cV^{*}=\{ x=(x_1,\ldots ,x_d)\in \Q^{d} :~xM=0\}.
$$

By Lemma~\ref{lem:BomVaa}, we see that $\cV^{*}$ has a basis $\tilde w_1,\ldots , \tilde w_{d-d_0}$ which satisfies 
\begin{equation}
\label{eq:wbounds-1}
\tilde w_j=(w_{j,1},\ldots ,w_{j,d}), \quad w_{j,i}=H^{O(1)}, 
\end{equation}
and 
\begin{equation}
\label{eq:wijeqn}
\sum_{i=1}^{d}w_{j,i}(n^{i}-m^{i})=\sum_{i=1}^{d}w_{j,i}(n_0^{i}-m_0^{i}),
\end{equation}
for each $(n,m)\in \cM$ and $1\le j \le d-d_0$. We next show there exists some $1\le j \le d-d_0$ such that 
$$
\sum_{i=1}^{d}w_{j,i}(n_0^{i}-m_0^{i})\neq 0.
$$
Since $(b_1,\ldots ,b_d)\in \cV^{*}$, there exists $\lambda_1,\ldots ,\lambda_{d-d_0}\in \Q$ such that 
$$
(b_1,\ldots ,b_d)=\sum_{j=1}^{d-d_0}\lambda_j \tilde w_{j}.
$$
Hence 
$$
\sum_{i=1}^{d}b_i(n_0^{i}-m_0^{i})=\sum_{j=0}^{d-d_0}\lambda_j\sum_{i=1}^{d}w_{j,i}(n_0^{i}-m_0^{i}).
$$
Since $(m_0,n_0)\in \cM$, we have 
$$0\neq w=\sum_{i=1}^{d}b_i(n_0^{i}-m_0^{i}),$$
which combined with the above implies that 
$$w^{*}=\sum_{i=1}^{d}w_{j_0,i}(n_0^{i}-m_0^{i})\neq 0,$$
for some $1\le j_0\le d-d_0$. From~\eqref{eq:wijeqn}, we see that each pair $(m,n)\in \cM$ satisfies 
\begin{equation}
\label{eq:wj0}
\sum_{i=1}^{d}w_{j_0,i}(n^{i}-m^{i})=w^{*}.
\end{equation}
By~\eqref{eq:wbounds-1}, we have $w^{*}=H^{O(1)}$ and the result follows from Lemma~\ref{lem:poly-eqn} provided $w_{j_0,i}\neq 0$ for some $i\ge 2$.  If $w_{j_0,i}=0$ for all $i\ge 2$ then~\eqref{eq:wj0} implies that each pair $(m,n)\in \cM$ satisfies 
\begin{equation}
\label{eq:w**}
n=m+w^{**},
\end{equation}
for some $w^{**}\in \N$. Substituting~\eqref{eq:w**} into~\eqref{eq:bbh-b5d=}, and recalling that by~\eqref{eq:123} $b_d\not=0$, we obtain that $W=O(1)$ which completes the proof.
\end{proof}
\subsection{Small fractional parts of linear forms}

\begin{lemma}
\label{lem:fractional-linear}
Let $d$ be an integer and $m_{i,j}, 1\le i,j\le d$, be a sequence of integers such that the matrix $M$ with $(i,j)$-th entry $m_{i,j}$ satisfies 
$$\det{M}\neq 0.$$
For any $\varepsilon_1,\ldots,\varepsilon_d> 0$
\begin{align*}
\mu\(\{(t_1,\ldots,t_d)\in [0,1]^{d}:~\{m_{1,j}t_1+\ldots+m_{d,j}t_d\}\le \varepsilon_j, \  1\le j \le d\}\) \ &
\\  \ll \varepsilon_1  \ldots  \varepsilon_d &,
\end{align*} 
where $\mu$ denotes the Lebesgue measure.
\end{lemma}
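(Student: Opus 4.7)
The plan is to recognise the set as the preimage of a box under an integer-linear endomorphism of the torus, and to exploit that such an endomorphism preserves the normalized Haar measure. In fact this approach yields a (slightly stronger) exact identity: the measure in question equals $\prod_{j=1}^d\min(\varepsilon_j,1)$, from which the stated bound is immediate.

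Concretely, writing $L_j(\mathbf t):=\sum_i m_{i,j}t_i$, the integrality of the $m_{i,j}$ means that the map
\[
\bar L\colon (\R/\Z)^d\to (\R/\Z)^d,\qquad \bar L(\mathbf t):=\bigl(L_1(\mathbf t),\ldots,L_d(\mathbf t)\bigr)\bmod \Z^d,
\]
is a well-defined continuous group endomorphism, represented by the integer matrix $M^T$. The hypothesis $\det M\neq 0$ forces $\bar L$ to be surjective, with finite kernel of cardinality $|\det M|$.

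I would then invoke the standard fact, coming from uniqueness of normalized Haar measure, that any continuous surjective endomorphism of a compact abelian group preserves Haar measure; hence $\mu(\bar L^{-1}(B))=\mu(B)$ for every Borel $B\subseteq (\R/\Z)^d$. Identifying $[0,1]^d$ with the torus via the quotient map and taking $B=\prod_j[0,\min(\varepsilon_j,1)]$, the set whose measure we wish to bound is precisely $\bar L^{-1}(B)$, so its Lebesgue measure equals $\prod_j\min(\varepsilon_j,1)\le \varepsilon_1\cdots \varepsilon_d$.

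The only non-bookkeeping step is the measure-preservation property of $\bar L$, which can alternatively be verified directly by expanding each indicator $\mathbf 1_{\{\{y\}\le \varepsilon_j\}}$ into its Fourier series on $\R/\Z$ and integrating term-by-term over $[0,1]^d$: only the zero-frequency mode survives, because any non-trivial mode corresponds to a nonzero integer vector $\mathbf n$ with $M\mathbf n=\mathbf 0$, which is excluded by $\det M\neq 0$. I therefore expect no genuine obstacle to arise in the argument.
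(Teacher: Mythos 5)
Your proposal is correct, and the route you take is genuinely different from (and somewhat cleaner than) the paper's. You identify the set with the preimage $\bar L^{-1}(B)$ of the box $B=\prod_j[0,\min(\varepsilon_j,1)]$ under the integer-matrix endomorphism $\bar L$ of the torus $(\R/\Z)^d$ represented by $M^T$, and then invoke the fact that a continuous surjective endomorphism of a compact abelian group preserves normalized Haar measure; since $\det M\neq 0$ makes $\bar L$ surjective, this yields the exact identity $\mu=\prod_j\min(\varepsilon_j,1)$, of which the stated $\ll$ bound is an immediate corollary. The paper instead constructs smooth $1$-periodic majorants $f_j$ with absolutely summable Fourier coefficients and $a_{j,0}\ll\varepsilon_j$, expands the product into a Fourier series, integrates, and observes that $\det M\neq 0$ kills every nonzero frequency; this gives only the upper bound $\ll\varepsilon_1\cdots\varepsilon_d$. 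The two arguments use the identical arithmetic input (the only integer solution of $M\vec{n}=0$ is $\vec{n}=0$), so they are close in spirit, but your packaging via Haar-measure invariance is more conceptual and strictly stronger. One small caveat: your remark that measure-preservation "can alternatively be verified" by Fourier-expanding the sharp indicators and integrating term-by-term needs care, since those Fourier series converge only conditionally and term-by-term integration is not automatic; this is precisely why the paper inserts smooth majorants with $\sum_\ell|a_{j,\ell}|<\infty$. Your primary Haar-measure argument sidesteps this entirely, so the gap does not affect your proof, but the aside as written is not a complete substitute for the majorant construction.
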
 

\begin{proof}
For each $1\le j \le d$, let $f_j$ be a $1$-periodic function satisfying 
$$f_j(x)\gg 1, \quad |x|\le \varepsilon_j,$$
with Fourier expansion 
$$f_j(x)=\sum_{\ell\in \Z}a_{j,\ell}e(\ell x),$$
and 
$a_{j,\ell}$ satisfying 
\begin{equation}
\label{eq:123123123}
a_{j,0}\ll \varepsilon_j, \quad \sum_{\ell}|a_{j,\ell}|< \infty.
\end{equation}
Define 
$$
\cS = \{(t_1,\ldots,t_d)\in [0,1]^{d}: ~\{m_{1,j}t_1+\ldots+m_{d,j}t_d\}\le \varepsilon_j, \ \ 1\le j \le d\},$$
so that 
$$
\mu(\cS)\ll \int_{[0,1]^{d}}\prod_{j=1}^{d}f_j(m_{1,j}t_1+\ldots+m_{d,j}t_d)dt_1\ldots dt_{d}.
$$
Expand each $f_{j}$ into a Fourier series to get 
$$
\mu(\cS)\ll \sum_{\ell_1,\ldots, \ell_{d}\in \Z}a_{1,\ell_1}\ldots a_{d,\ell_d}\int_{[0,1]^{d}}e\left(\sum_{r=1}^{d}t_r(\sum_{j=1}^{d}m_{r,j}\ell_j )\right)dt_{1}\ldots dt_{d}.
$$
Since each $m_{i,j}\in \Z$, by orthogonality of the exponential function 
$$
\mu(\cS)\ll \sum_{\substack{\ell_1,\ldots, \ell_{d}\in \Z \\ \sum_{j=1}^{d}m_{r,j}\ell_j=0 \\ 1\le r \le d}}|a_{1,\ell_1}\ldots a_{d,\ell_d}|.
$$
Using the assumption $\det M\neq 0$, the only term in the above sum corresponds to $(\ell_1,\ldots,\ell_d)=0$ and the result follows from~\eqref{eq:123123123}.
\end{proof}

\subsection{Bilinear character sums and energy} 
To prove  Theorem~\ref{thm:Primes}, we require the bound of~\cite[Corollary~1.3]{ShkShp} on the bilinear multiplicative character sums
$$
W_\chi(\cI, \cS;\balpha, \bbeta) := \sum_{s\in \cS}  \sum_{x \in \cI} \alpha_s \beta_x    \chi(s+x) , 
$$
where $\cI = [1,H]\subset \Z_p$, $\cS \subseteq \Z_p$ is  an arbitrary  set of cardinality $\# \cS = S$, 
and  $\balpha = \{\alpha_s\}_{s\in \cS},$ $\bbeta = \{\beta_x\}_{x\in \cI}$  
are sequence of complex weights with
\begin{equation}
\label{eq:unit disc}
|\alpha_s| \le 1, \quad s\in \cS, \mand |\beta_x| \le 1, \quad x\in \cI. 
\end{equation}

\begin{lemma}
\label{cor:Gen Set Energy} Given an interval  $\cI = [1,H]$ 
of length $H<p$ 
and a set $\cS \subseteq\Z_p^*$  of size $\# \cS=S$
with  
\begin{equation}
\label{eq:cond SX}
S^2 H \le  p^{2} \mand H < p^{1/2}
\end{equation}
and complex weights $\balpha = \{\alpha_s\}_{s\in \cS}$ and $\bbeta = \{\beta_x\}_{x\in \cI}$  
satisfying~\eqref{eq:unit disc},
for any fixed integer $r \ge 1$ such that $H \ge p^{1/r}$,  we have
\begin{align*}
& W_\chi(\cI, \cS;\balpha, \bbeta)    \\
&\qquad  \ll  SH\( \frac{E(\cS) p^{(r+1)/r}}{S^4H^2}+\frac{ p^{(r+2)/r}}{  SH^{5/2}} +\frac{p^{(r+2)/r}}{S^2 H^2} \)^{1/4r} p^{o(1)} + S^{1/2} H\,.
\end{align*}
\end{lemma}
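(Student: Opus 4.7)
\smallskip

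\textbf{Proof proposal.} The strategy is a Burgess--Karatsuba style amplification coupled with Hölder's inequality, where the additive energy $E(\cS)$ will emerge precisely as the count of ``diagonal'' contributions in a $2r$-th moment estimate. Write
\[
W_\chi(\cI, \cS; \balpha, \bbeta) = \sum_{s \in \cS} \alpha_s \, T(s), \qquad T(s) := \sum_{x \in \cI} \beta_x \chi(s+x).
\]
The first step is to introduce an auxiliary short averaging interval. For any $Y \ge 1$ with $Y \le H/2$, the sum $T(s)$ equals $T(s+y)$ up to boundary terms of size $O(Y)$, uniformly for $|y| \le Y$. Averaging this identity over $y \in [1,Y]$ and reordering gives, after absorbing $\alpha_s$,
\[
W_\chi \;=\; \frac{1}{Y}\sum_{y=1}^{Y}\sum_{s \in \cS} \alpha_s \sum_{x \in \cI}\beta_x\, \chi(s+x+y) \;+\; O(SY),
\]
at which point the variable $u := s + y$ ranges over a (possibly weighted) set whose additive structure is inherited from $\cS$.

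\smallskip

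Next I would apply Hölder's inequality in the $(s,y)$ variable to raise the inner sum to the $2r$-th power, using the hypothesis $H \ge p^{1/r}$ to guarantee that a Weil-type square-root bound becomes non-trivial. This produces
\[
|W_\chi|^{2r} \;\ll\; (SY)^{2r-1}\!\!\sum_{s,y}\Big|\sum_{x\in\cI}\beta_x\chi(s+x+y)\Big|^{2r} + (SY)^{2r}\, \text{error}.
\]
Expanding the $2r$-th power yields a sum over tuples $(x_1,\dots,x_{2r})\in\cI^{2r}$ of the completed character sum
\[
\sum_{s,y}\chi\!\Bigl(\prod_{i=1}^{r}(s+y+x_i)\Bigm/\prod_{j=1}^{r}(s+y+x_{r+j})\Bigr).
\]
For tuples where the rational function is non-constant, the Weil bound yields square-root cancellation. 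The ``bad'' tuples where the function is constant are, up to permutation, counted by the number of $2r$-tuples in $\cI$ with matching multisets, which contribute at most $H^{r+o(1)}$; the coupling with the $(s,y)$ variables contributes the number of representations $s + y = s' + y'$, which is precisely governed by $E(\cS)$.

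\smallskip

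The final step is to combine the diagonal estimate (driven by $E(\cS)$ and the number of representations in $\cS + [1,Y]$) with the off-diagonal Weil contribution (of order $p^{(r+1)/2r}$ per tuple after summing in $s$), and to optimise the free parameter $Y$ subject to the constraints $Y \le H$ and $H \ge p^{1/r}$. The three terms in the claimed bound should then appear: the $E(\cS)$-term from the diagonal, the $p^{(r+2)/r}/(SH^{5/2})$ term from a Cauchy--Schwarz step controlling off-diagonal completed sums, and the $p^{(r+2)/r}/(S^2H^2)$ term from a further worst-case Weil application. The trivial term $S^{1/2}H$ comes from the boundary correction in the shift and from the Cauchy--Schwarz defect when the amplification gains nothing.

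\smallskip

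\textbf{Main obstacle.} The delicate part is correctly identifying which collision count in the $2r$-th moment is governed by $E(\cS)$ as opposed to higher-order energies; naively one might expect $T_{2r}(\cS)$ to appear, and the gain here comes from factoring out an additive coincidence of size two. Tracking the exponents through Hölder so that $E(\cS)$ (rather than a weaker quantity) lands with the combination $p^{(r+1)/r}/(S^4 H^2)$, while simultaneously respecting the constraints~\eqref{eq:cond SX}, is the main bookkeeping hurdle.
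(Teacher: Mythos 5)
The paper does not prove this lemma; it is imported verbatim as a black box from Shkredov and Shparlinski~\cite[Corollary~1.3]{ShkShp}, so there is no in-paper proof to compare your attempt against. Nonetheless, your sketch contains a concrete error that would need to be fixed before it could reconstruct that result.

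The opening shift step is false for general weights. You claim that $T(s)=\sum_{x\in\cI}\beta_x\chi(s+x)$ satisfies $T(s)=T(s+y)+O(Y)$ for $|y|\le Y$. Writing $T(s+y)=\sum_{x'\in[1+y,H+y]}\beta_{x'-y}\chi(s+x')$, you see that the two quantities differ not only by a boundary contribution but also by the replacement of the weight $\beta_{x'}$ with $\beta_{x'-y}$ throughout the interval. This is harmless only when $\bbeta$ is (approximately) shift-invariant, which it need not be: the lemma assumes nothing beyond $|\beta_x|\le 1$, and the actual application in Theorem~\ref{thm:Primes} takes $\beta_x$ to be the indicator of primes, which is certainly not slowly varying. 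Consequently the displayed reduction to $\frac{1}{Y}\sum_{y}\sum_{s,x}\alpha_s\beta_x\chi(s+x+y)+O(SY)$ does not follow, and everything downstream (the $2r$-th moment, the $E(\cS)$ diagonal, the $Y$-optimisation) rests on an invalid identity. The standard way to repair this in Burgess--Karatsuba arguments for bilinear sums is to apply Cauchy--Schwarz \emph{first} to strip the interval weight $\bbeta$ before any translation of the $x$-variable, and only then introduce the amplifying shift; this changes the order of operations and where $E(\cS)$ enters, so the bookkeeping you sketch in the last two paragraphs would also need to be redone. The high-level ingredients you name (amplification, H\"older to a $2r$-th moment, Weil for nondegenerate completed sums, a diagonal count tied to additive energy) are of the right general flavour, but as written the argument does not go through, and the exponents in the three displayed terms are asserted rather than derived.
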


 \section{Proofs of Main Results}
 
 \subsection{Proof of Theorem~\ref{thm:Energy1}}

For an integer $s \ge 1$ we denote by  $\sfT_s$ the number of solutions to the congruence 
\begin{equation}
\begin{split}
\label{eq:sym cong}
f(x_1)+\ldots+f(x_s)&\equiv f(x_{s+1})+\ldots+f(x_{2s}) \mod  m,
\end{split}
\end{equation}
with variables satisfying  $x_1\ldots,x_{2s} \in \cI,$ so that
$$
 T_{f,m}(\cI)  = \sfT_2. 
$$

Now we define $K \ge 1$ by the equation 
\begin{equation}
\label{eq: K def}
\sfT_2 =  H^3/K .
\end{equation}

Let 
$$
R(\lambda) =  \# \{\(x_1,x_2\) \in \cI^2 :~f(x_1)+f(x_2) = \lambda\} .
$$
Using that 
$$
H^2 =\sum_{\lambda \in \Z_m}R(\lambda)   = \sum_{\lambda \in f(\cI) + f(\cI)}  R(\lambda)  , 
$$
by the Cauchy--Schwarz inequality, we derive from~\eqref{eq: K def} that
\begin{align*}
H^4 & \le   \# \(f(\cI) + f(\cI)\) \sum_{\lambda \in f(\cI) + f(\cI)} R(\lambda)^2 \\
& =  \# \(f(\cI) + f(\cI)\) \sfT_2 =  \# \(f(\cI) + f(\cI)\) H^3/K.
\end{align*}
Therefore,  we have 
\begin{equation}
\label{eq: Bound HK}
 \# \(f(\cI) + f(\cI)\)  \ge HK.
\end{equation}
We now obtain a lower bound on $K$.

It has been shown in~\cite[Equation~(2.26)]{ShkShp}  that expressing   $ \sfT_s$ 
via exponential sums and using the H\"older inequality, for any integer $s \ge 2$
we have 
$$
\sfT_2^{s-1}  \le \sfT_s\(\# f(\cI)\)^{s-2}  \ll H^{s-2} \sfT_s. 
$$
Hence  from~\eqref{eq: K def} we derive
\begin{equation}
\label{eq:T Low}
\sfT_s \gg H^{2s-1}/ K^{s-1}.
\end{equation}

From now on the proof essentially follows that of~\cite[Theorem~1]{CGOS}.
Namely, writing
\begin{equation}
\label{eq:lambda}
\lambda_j = x_1^{j}+\ldots + x_s^{j}-x_{s+1}^j-\ldots-x_{2s}^j,
\qquad j = 1, \ldots, d,
\end{equation}
we see that $\lambda_j \in [-sH^j, sH^j]$.
We also see that for any solution to~\eqref{eq:sym cong} we have the linear 
congruence 
$$
a_d \lambda_d +\ldots +a_1\lambda_1 \equiv 0 \mod  m
$$
For each of $O(H^{d(d-1)/2})$ choices of
$$(\lambda_1, \ldots, \lambda_{d-1}) \in
 [-sH, sH] \times \ldots  [-sH^{d-1}, sH^{d-1}],
$$
we see that for any solution to~\eqref{eq:sym cong} we have
$$
a_d \lambda_d \equiv \xi \mod  m
$$
for some $\xi $ depending only on 
$\lambda_1, \ldots, \lambda_{d-1}$.
Since $\gcd(a_d,m) =1$, we conclude that $\lambda_d$ can take $O(H^d/m + 1)$
possible values.
We now see that for some integers $\mu_1, \ldots, \mu_d$
\begin{equation}
\label{eq:T and J}
\sfT_s \ll \(H^d/m + 1\)H^{d(d-1)/2}
J_{d,s}(\mu_1, \ldots,\mu_d;H),
\end{equation}
where $I_{d,s}(\lambda_1, \ldots, \lambda_d; H)$ is
the number of solutions to the system of equations~\eqref{eq:lambda}
in variables  $1 \le x_1\ldots,x_{2s}\le H$.
As in~\cite[Equation~(13)]{CGOS}, a  standard argument shows that 
\begin{equation}
\label{eq:J(0)}
I_{d,s}(\lambda_1,   \ldots, \lambda_d; H) \le I_{d,s}(0,   \ldots, 0; H)
= J_{d,s}(\{1, \ldots, H\}).
\end{equation}
 
Thus, combining~\eqref{eq:T and J} and~\eqref{eq:J(0)}, we derive
$$
\sfT_s \ll  (H^d/m + 1)H^{d(d-1)/2+1}  J_{d,s}(\{1, \ldots, H\}).
$$
Hence, Lemma~\ref{lem:MVT-GenSet}, taken with  
$$
s = d(d+1)/2 \mand \cX = \{1, \ldots, H\}
$$ 
yields
\begin{equation}
\label{eq:T Up}
\sfT_s \le 
 (H^d/m + 1)H^{2s - d +1+o(1)} =  H^{2s +o(1)}/m + H^{2s - d +o(1)}.
\end{equation}
Combining~\eqref{eq:T Low} and~\eqref{eq:T Up} 
we obtain 
$$
K^{-s+1} H^{2s-1} \le H^{2s +o(1)}/m + H^{2s - d +o(1)}
$$
or 
$$
K^{-s+1}  \le H^{1 +o(1)}/m + H^{ - d+1 +o(1)}.
$$ 
With the above choice of $s$, we obtain 
$$ 
K \ge \min\left\{\(m/H\)^{\alpha_d}, H^{\beta_d}\right\} H^{o(1)}, 
$$
where $\alpha_d$ and $\beta_d$ are given by~\eqref{eq: alpha beta},
which together with~\eqref{eq: Bound HK}  concludes the proof. 

 \subsection{Proof of Theorem~\ref{thm:Energy2}}
Define $\rho(w)$ to be the number of solutions to the congruence 
\[
w\equiv u - v \mod {m}, \qquad u,v \in f(\cI).
\]

Let $r(w)$ denote the number of solutions
\begin{equation}
\label{eq:Cong wfxfy}
 w\equiv f(x) - f(y) \mod {m}, \qquad x,y \in \cI,
\end{equation} 
and let $\cX_w$ denote the set of $x\in \cI$ such that there exists $y\in \cI$ satisfying the above congruence. 
Trivially, for $w \not \equiv 0\mod {m}$ we have 
\begin{equation}
\label{eq:fxfy}
 f(x)\not \equiv f(y) \mod {m}
\end{equation} 
for each solution $(x,y)$ to~\eqref{eq:Cong wfxfy}. 
It is clear that $\rho(w)\leq r(w)$ for all $w\in\Z_m$ and
\begin{equation}
\label{eq:ED}
E_+(\cZ,f(\cI))= HZ +\sum_{w=1}^{m-1}R (w)\rho(w),
\end{equation} 
where $R(w)$ counts the number of solutions to 
$$z_1-z_2=w, \quad z_1,z_2\in \cZ.$$

Note that  by Lemma~\ref{lem:short}, if $H\le m^{c_d}$ for some constant $c_d$ then $r(w)=H^{o(1)}$, so that~\eqref{eq:ED} implies 
\begin{align*}
E_+(\cZ,f(\cI))&= HZ +H^{o(1)}\sum_{w=1}^{m-1}R (w) \\ 
&= HZ +H^{o(1)}Z^2.
\end{align*}
It is sufficient to assume $H\ge m^{c_d}$ and hence establish that 
$$
E_+(\cZ,f(\cI)) = \(\frac{H^{2}Z^2}{m^{2/d(d+1)}}+Z(H+Z)\)m^{o(1)}.
$$ 
Assume without loss of generality, that 
$$
a_d = 1 \mand a_0 = 0.
$$ 

Fix $1\le w \le m-1$ and consider  $r(w)$.

Let $s=d(d+1)/2$. We define the intervals
$$
\cI_{j}=[-s H^j, s H^j],
\qquad  j =1, \ldots, d,
$$
and consider the set
$$
\cS_w\subseteq  \cI_{1}\times \ldots\times \cI_{d}\subseteq \Z^{d},
$$
of all the $d$-tuples
\begin{equation}
\label{eq:triples} \vec{x}=
(x_1+\ldots +x_{s}, \ldots, x_1^d+\ldots +x_{s}^d),
\end{equation}
such that $x_i\in \cX_w$.

For each $\vec{x}$ we let $I(\vec{x})$ count the number of representations of $\vec{x}$
in the form~\eqref{eq:triples} with variables $x_1,\ldots,x_s\in \cX$. Thus
$$\sum_{\vec{x} \in \cS_w} I(\vec{x}) = X_w^{s},$$
where 
$$
X_w = \# \cX_w
$$
and 
$$\sum_{\vec{x} \in \cS_w} I(\vec{x})^2 = J_{d,s}\(\cX_w\),$$
with $J_{d,s}\(\cX_w\)$ is as in Section~\ref{sec:MVT}.   By Lemma~\ref{lem:MVT-GenSet} we have, 
$$\sum_{\vec{x} \in \cS_w} I(\vec{x})^2\le X_w^{s} H^{o(1)},$$
and from the Cauchy--Schwarz inequality
$$
X_w^{s}=\sum_{\vec{x} \in \cS_w} I(\vec{x}) \le \(\# \cS_w
\sum_{\vec{x} \in \cS_w} I(\vec{x})^2\)^{1/2},
$$
we see that
\begin{equation}\label{eq:S large}
\# \cS_w\ge X_w^{s}H^{o(1)}.
\end{equation}

Define the lattice 
\begin{align*}
\cL=\biggl\{(v_1, \ldots, v_d,w_1,\ldots,w_d)& \in \Z^{2d} :\\
 ~\sum_{i=1}^d & a_i(v_i-w_i)  \equiv 0 \mod {m} \biggr\},
\end{align*}
where $a_1, \ldots, a_d$ are non-constant coefficients of $f$ in~\eqref{eq:Poly f}
and the convex body
$$
\cD= \{(g_1,\ldots,g_d, h_1, \ldots, h_d)  \in \R^{2d}  :~
 ~|g_i|, |h_i|\le sH^{i},  \  i =1, \ldots, d\}.
$$

It follows from~\eqref{eq:S large} that
\begin{equation}
\label{eq:Xl1}
X_w^{s}\ll \#\(\cL \cap \cD\)H^{o(1)}.
\end{equation}

Let $\lambda_1,\ldots,\lambda_{2d}$ denote the successive minima of $\cL$ with respect to $\cD$. 

We consider two cases depending on if $\lambda_{2d}<1$ or not.

 First consider the case
$$
\lambda_{2d}<1.
$$ 
By Lemmas~\ref{lem:mst} and~\ref{lem:lattice}
$$
\#\(\cL \cap \cD\)\ll \frac{H^{d(d+1)}}{m},
$$
and hence by~\eqref{eq:Xl1} we have
$$
X_w\le \frac{H^{2+o(1)}}{m^{1/s}} , 
$$ 
which by~\eqref{eq:Xl1} implies 
$$
\rho(w) \le \frac{H^{2+o(1)}}{m^{1/s}}. 
$$

Consider next when 
\begin{equation}
\label{eq:case2}
\lambda_{2d}\ge 1.
\end{equation}
Recalling~\eqref{eq:Gammastar} and~\eqref{eq:Dstar}, we see that the dual lattice $\cL^{*}$  and the dual body $\cD^{*}$ are given by
\begin{align*}
\cL^{*}= \frac{1}{m}\Bigl\{(u_1, \ldots, u_d, z_1, &\ldots,z_d) \in \Z^{2d}: \\
& a_i u_d\equiv -z_i \equiv u_i \mod {m}, \ i =1, \ldots, d\Bigr\},
\end{align*}  
and
$$
\cD^{*}=\left \{ \(g_1, \ldots, g_d, h_1,\ldots,h_d\)\in \R^{2d}:~ \sum_{i=1}^{d}sH^{i}\(|h_i|+ |g_i|\)\le 1\right  \}. 
$$

If $\lambda_1^{*}$ denotes the first successive minimum of $\cL^{*}$ with respect to $\cD^{*}$, then by~\eqref{eq:case2} and Lemma~\ref{lem:transfer} there exists some constant $c$ depending only on $d$ such that
$$
\lambda_1^{*}\le c 
$$
which in turn implies 
$$
\cL^{*}\cap c\cD^{*}\neq \emptyset.
$$
Hence there exist $u_1, \ldots, u_d\in \Z$ such that 
\begin{equation}
\label{eq:congcond}
a_iu_d\equiv u_i \mod {m}, 
\end{equation}
and
\begin{equation}
\label{eq:sizecond}
|u_i|\ll \frac{m}{H^{i}}, \quad 1\le i \le d.
\end{equation}
Recalling~\eqref{eq:fxfy}, we now see that 
\begin{align*}r(w)\le \# \bigl\{ x,y\in \cI:~\widetilde f(x)  - \widetilde f(y)  &\equiv \mu \mod {m}, \\
  & f(x)\not\equiv f(y) \mod {m} \bigr\} ,
  \end{align*}
where $\mu \equiv u_d w \mod  m$, $0 \le \mu < m$,  and 
$$\widetilde f(x)=u_1x+ u_2 x^2+\ldots+u_dx^{d}.$$

Recalling~\eqref{eq:congcond}, we obtain
$$
\widetilde f(x)-\widetilde f(y)=  \mu + tm,
$$
for some $t\in \Z$.

If $x,y \in \cI$, we see from~\eqref{eq:sizecond}  that 
$$
f(x)-f(y) \ll m
$$
and thus $t$ takes only $O(1)$ possible values.
 Hence there exists some integer $\mu_0$ such that 
$$r(w)\ll \# \left\{ x,y\in \cI :~  \widetilde f(x)-\widetilde f(y)= \mu_0,  \  f(x)\not\equiv f(y) \mod {m}  \right \}.$$ 
Note if $\mu_0= 0$ 
 then the conditions  $f(x)\not\equiv f(y) \mod {m}$ and $1\le u_d\le m-1$ imply that $r(w)=0$. Hence we may suppose $\mu_0\neq 0$.

It follows from Lemma~\ref{lem:poly-eqn} and the inequality $H \le m$ that  
\begin{equation}\label{eq:case2final}
r(w)\le w^{o(1)}\le   (mH)^{o(1)} = m^{o(1)}.
\end{equation}
Combining~\eqref{eq:case2final} with~\eqref{eq:ED} and using
$$
\sum_{w=1}^{m-1}R(w) \le Z^2,
$$
 gives 
\begin{align*}
E_+(\cZ,f(\cI))&=HZ+\left(\frac{H^{2}}{m^{1/s}}+1\right)m^{o(1)}\sum_{w=1}^{m-1}R(w) \\ 
&=\(\frac{H^{2}Z^2}{m^{2/d(d+1)}}+Z(H+Z)\)m^{o(1)},
\end{align*}
which concludes the proof.


\subsection{Proof of Theorem~\ref{thm:main3}}
As in the proof of Theorem~\ref{thm:Energy2}, we may suppose $H\ge m^{c_d}$ for some constant $c_d$ and  that 
$$
a_d = 1, \quad a_0 = 0.
$$
Consider the lattice 
$$
\cL=\left\{(v_1, \ldots, v_d) \in \Z^{d} : ~\sum_{i=1}^d  a_i v_i  \equiv 0 \mod {m} \right\},
$$
and the convex body
$$
\cD= \{(g_1,\ldots,g_d)  \in \R^{d}:  ~ |g_i|\leq H^{i},  \  i =1, \ldots, d \}.
$$ 
Let $\lambda_1,\ldots,\lambda_d$ denote the successive minima of $\cL$ with respect to $\cD$. We consider two cases depending on whether
\begin{equation}
\label{eq:case32}
\lambda_d>1
\end{equation} 
or 
\begin{equation}
\label{eq:case31}
\lambda_d\le 1.
\end{equation}

If~\eqref{eq:case32} holds, then there exists a constant $c$ depending only on $d$ such that 
for the dual lattice and body we have
$$\cL^{*}\cap c\cD^{*}\neq 0.$$
Arguing as in the proof of Theorem~\ref{thm:Energy2}, we obtain a polynomial 
$$g(x)=u_1x+\ldots+u_dx^{d},$$
with $u_d\neq 0$ and coefficients bounded by $m$ such that
$$T_{f,m}(\cI)\ll \# \{ 1\le x,y,z,w \le H :~ g(x)-g(y)=g(w)-g(z) \}.$$
Hence from Corollary~\ref{cor:poly-eqn}
$$
T_{f,m}(\cI)\le H^{2} m^{o(1)}.
$$

Consider next~\eqref{eq:case31}. By Lemma~\ref{lem:mahler}, there exists a basis
$$\vec{v}_{j}=(v_{1,j},\ldots,v_{d,j})\in \cL\cap \lambda_j\cD,$$
such that each $\vec{b}  \in \cL\cap \cD$ can be expressed in the form 
\begin{equation}
\label{eq:m-basis}
\vec{b}=b_1\vec{v}_1+\ldots+ b_d \vec{v}_d, \qquad  b_j\ll \frac{1}{\lambda_j}, \ j =1, \ldots, d.
\end{equation}

For an integer $x$ we use $\widetilde x$ to denote the following vector
$$\widetilde x=(x,x^2,\ldots,x^{d}).$$

Let $\varphi$ be a smooth function  with support 
$$
\supp \varphi \subseteq [-2,2]
$$
and 
satisfying 
$$\varphi(x)\gg 1, \quad |x|\le 1.$$
Let $K$ be a large integer to be determined later. Using orthogonality of exponential functions,  it follows that for a constant $c$, depending on the implied constants in~\eqref{eq:m-basis}, and hence only on $d$,
\begin{align*}
T_{f,m}(\cI)\ll&\frac{1}{K^{d}}\sum_{h_1, \ldots, h_d\in \Z}\sum_{x_1, \ldots, x_4\le H}\prod_{j=1}^{d}\varphi(c\lambda_j h_j)\\
&\qquad \int_{\vec{y}\in [0,K]^{d}}\e\(\langle \widetilde x_1+\widetilde x_2-\widetilde x_3-\widetilde x_4,\vec{y}\rangle +\sum_{j=1}^{d}h_j\langle \vec{v}_j,\vec{y}\rangle\)d\vec{y},
\end{align*}
where, as in the above $\langle \ast, \ast \rangle$ denotes the Euclidean inner product and
$$\vec{y}=(y_1,\ldots,y_d), \quad \widetilde x_j=(x_j,\ldots,x_j^{d})$$
(as in the above convention).

Note that we have used periodicity of the exponential function to integrate over the large cube $[0,K]^{d}$. Our next step is to reduce the above to a counting problem involving a fundamental domain for $\cL^{*}$ and having such a large range of integration simplifies some technical issues.

Consider first summation over $h_1, \ldots, h_d$.  By Poisson summation 
$$
\sum_{h_j\in \Z}\varphi(c\lambda_j h_j)\e(h_j\langle \vec{v}_j,\vec{y}\rangle)\ll \frac{1}{\lambda_j}\sum_{h\in \Z}\left|\widehat \varphi\(\(\langle \vec{v}_j,\vec{y}\rangle -h\)/\lambda_j\)\right|.
$$
Repeated integration by parts shows that for any small $\varepsilon>0$ and large constant $C$
$$
\widehat \varphi\(x/\lambda_j\) \ll \frac{1}{x^2 m^{C}} \quad \text{unless} \quad |x|\le \lambda_j m^{\varepsilon}.
$$
Since $\lambda_1,\ldots,\lambda_d\le 1$, this implies 
\begin{align*}
\sum_{h_j\in \Z}\varphi\(c\lambda_j h_j\)\e\(h_j\langle \vec{v}_j,\vec{y}\rangle\)\ll \begin{cases}\lambda_j^{-1} & \text{if} \quad \{ \langle \vec{v}_j,\vec{y}\rangle\}\le \lambda_j m^{\varepsilon}, \\ m^{-C} &\text{otherwise} \end{cases}
\end{align*}
where $\{\xi\}$ denotes the fractional part of $\xi \in \R$.

Define the set 
\begin{equation}
\label{eq:V*}
\cV^{*}=\left\{ \vec{y}\in [0,K]^{d} :~ \{ \langle \vec{v}_j,\vec{y}\rangle\}\le \lambda_jm^{\varepsilon}, \   j=1, \ldots, d\right\},
\end{equation}
and let $\mu$ denote the Lebesgue measure in $\R^d$. The above implies 
$$
T_{f,m}(\cI)\ll \frac{1}{\lambda_1\ldots \lambda_d}\frac{1}{K^{d}}\int_{\vec{y}\in \cV^{*}}\left|\sum_{1\le x \le H}\e(\langle \widetilde x, \vec{y} \rangle ) \right|^{4} d\mu\(\vec{y}\)
$$
and hence by H\"{o}lder's inequality, we have
\begin{align*}
T_{f,m}(\cI)& \ll \frac{1}{\lambda_1\ldots \lambda_d}  \frac{\mu(\cV^{*})^{1-4/d(d+1)}}{K^{d}}\\
&\qquad \qquad  \(\int_{[0,K]^{d}}\left|\sum_{1\le x \le H}
\e(\langle \widetilde x, \vec{y} \rangle  d\mu\(\vec{y}\))  \right|^{d(d+1)}\)^{4/d(d+1)}.
\end{align*}  
By Lemma~\ref{lem:vmvt} and the fact that 
\begin{align*}
\int_{[0,K]^{d}}\left|\sum_{1\le x \le H}
\e(\langle \widetilde x, \vec{y} \rangle  d\mu\(\vec{y}\))  \right|^{d(d+1)}&\\
=K^{d}\int_{[0,1]^{d}}&\left|\sum_{1\le x \le H}
\e(\langle \widetilde x, \vec{y} \rangle  d\mu\(\vec{y}\))  \right|^{d(d+1)},
\end{align*}
 we derive
\begin{equation}
\label{eq:Tfm-1}
T_{f,m}(\cI)\le\frac{1}{\lambda_1\ldots \lambda_d}\frac{\mu(\cV^{*})^{1-4/d(d+1)}}{K^{d(1-4/d(d+1))}}H^{2+o(1)}.
\end{equation}
We next estimate the measure $\mu(\cV^{*}).$ By Lemma~\ref{lem:mahler}, we may choose a basis $\vec{v}_1^{*},\ldots,\vec{v}_{d}^{*}$ for $\cL^{*}$ satisfying 
\begin{equation}
\label{eq:vecjstar}
\vec{v}_j^{*}\in \frac{d\lambda_j^{*}}{2}\cD^{*}.
\end{equation}
Consider the fundamental domain
\begin{equation}
\label{eq:funddomain1}
\R^{d}/\cL^{*}=\{ t_1\vec{v}_1^{*}+\ldots+t_d \vec{v}_{d}^{*} :~ 0\le t_1, \ldots, t_d<1\}.
\end{equation}
We have 
\begin{equation}
\label{eq:funddomain}
\mu(\R^{d}/\cL^{*})=\det \cL^{*}=\frac{1}{\det \cL }=\frac{1}{m}.
\end{equation}
Let $\cL^{*}_0$ denote the set 
\begin{equation}
\label{eq:l0l0l0}
\cL_0^*=\{ \vec{b} \in \cL^{*} :~ \vec{b}+\R^{d}/\cL^{*}\cap [0,K]^{d}\neq \emptyset\}.
\end{equation}

By~\eqref{eq:vecjstar} and the fact that 
$$\cD^{*}\subseteq \prod_{j=1}^{d}\left[-\frac{c_0}{H^{j}},\frac{c_0}{H^{j}}\right]
$$
for some absolute constant $c_0$.
It follows from~\eqref{eq:funddomain1} that
$$
\R^{d}/\cL^{*}\subseteq \prod_{j=1}^{d}\left[-\frac{c_0\lambda_d^{*}}{H^{j}},\frac{c_0\lambda_d^{*}}{H^{j}}\right],
$$  
which by Lemma~\ref{lem:transfer} implies 
$$
\R^{d}/\cL^{*}\subseteq \prod_{j=1}^{d}\left[-\frac{c_1}{\lambda_1H^{j}},\frac{c_1}{\lambda_1H^{j}}\right],
$$
for some constant $c_1$ depending only on $d$.
Since $$\cL\cap \lambda_1\cD\neq \{0\},$$ it follows that 
$$\lambda_1\gg\frac{1}{H^{d}},$$
which implies 
$$
\R^{d}/\cL^{*}\subseteq \prod_{j=1}^{d}\left[-c_2H^{d-j},c_2H^{d-j}\right].
$$
Recall~\eqref{eq:l0l0l0} and note that if $\vec{b}\in \cL_0^{*}$ then 
$$\vec{b}\in [0,K]^{d}-\R^{d}/\cL^{*}.$$
Hence
$$
\cL_0^{*}\subseteq [-c_0H^{d},K+c_0H^{d}]^{d},$$
for some  constant $c_2$ depending only on $d$.
Choosing 
$$K=e^{mH},$$
we see that 
$$
\# \cL_0^{*}\le \# \([-C\log{K},K+C\log{K}]^{d}\cap \cL^{*}\)=m(1+o(1))K^{d},
$$
as $K\rightarrow \infty$.

Note that each $x\in \R^{d}$ has a unique expression in the form
$$x=\ell+\widetilde t, \qquad \ell \in \cL^{*}, \quad \widetilde t\in \R^{d}/\cL^{*},$$
and hence
$$
\mu(\cV^{*})=\sum_{\ell \in \cL_0}\mu(\cV^{*}\cap (\ell+\R^{d}/\cL^{*})).
$$
For any $\ell \in \cL^{*}$ 
$$\mu(\cV^{*}\cap (\ell+\R^{d}/\cL^{*}))=\mu(\cV^{*}\cap (\R^{d}/\cL^{*})),$$
so that the above implies 
$$
\mu(\cV^{*})\le (1+o(1))mK^{d}\mu(\cV^{*}\cap (\R^{d}/\cL^{*})).
$$
Combining with~\eqref{eq:Tfm-1}
\begin{equation}
\label{eq:V*-1}
T_{f,m}(\cI)\le \frac{H^{2+o(1)}}{\lambda_1\ldots \lambda_d}m^{1-4/d(d+1)}\mu(\cV^{*}\cap (\R^{d}/\cL^{*}))^{1-4/d(d+1)}.
\end{equation}
Recall~\eqref{eq:funddomain1} and~\eqref{eq:V*}. If 
$$z\in \cV^{*}\cap (\R^{d}/\cL^{*})$$
then 
\begin{equation}
\label{eq:vjz-1}
\{\langle \vec{v}_j,z\rangle\}\le \lambda_jm^{\varepsilon}, \   1\le j \le d
\end{equation}
and there exists $0\le t_1,\ldots,t_d\le 1$ such that 
$$z=t_1\vec{v}_1^{*}+\ldots+t_d\vec{v}_d^{*}.$$

Let $\cV^{**}$ denote the set of all $0\le t_1,\ldots,t_d\le 1$ such that 
$$
\left\{\sum_{k=1}^{d}t_k\langle \vec{v}_j,\vec{v}^{*}_k\rangle \right\}\le \lambda_j m^{\varepsilon}, \quad 1\le j \le d.
$$ 
Combining~\eqref{eq:vjz-1} with a linear change of variables, and then recalling~\eqref{eq:funddomain},  we see that
\begin{equation}
\label{eq:V**}
\mu\(\cV^{*}\cap (\R^{d}/\cL^{*})\)=\mu(\cV^{**}) \mu(\R^{d}/\cL^{*}) =\frac{\mu(\cV^{**})}{m}.
\end{equation}
Let $M$ denote the matrix with $(j,k)$-th entry 
$$m_{j,k}=\langle \vec{v}_j,\vec{v}^{*}_k\rangle.$$
Note that 
$$m_{j,k}\in \Z.$$
Let $L$ denote the matrix whose rows correspond to $\vec{v}_j$ and $L^{*}$ denote the matrix whose rows correspond to $\vec{v}_{k}^{*}$, so that 
$$M=LL^{*}.$$
Since $\vec{v}_{1},\ldots,\vec{v}_{d}$ form a basis for $\cL$ and $\vec{v}_{1}^{*},\ldots,\vec{v}_{d}^{*}$ form a basis for $\cL^{*}$, the above implies 
$$
\det{M}\neq 0.
$$
Hence by Lemma~\ref{lem:fractional-linear}
$$
\mu(\cV^{**})\ll \lambda_1\ldots \lambda_d m^{d\varepsilon},
$$
which combined with~\eqref{eq:V**} implies 
$$
\mu\(\cV^{*}\cap (\R^{d}/\cL^{*})\)\ll\frac{\lambda_1\ldots \lambda_d}{m^{1-\varepsilon d}}.
$$
The result follows from~\eqref{eq:V*-1} and Lemma~\ref{lem:mst}, after taking $\varepsilon$ sufficiently small.

\subsection{Proof of Theorem~\ref{thm:Primes}}
Since both sums are bounded similarly, we only deal with the first sum. As observed in~\cite{ShkShp}, we may write 
$$
 \sum_{\substack{q\le Q\\q~\text{prime}}}  \left|  \sum_{\substack{r \le R\\ r~\text{prime}}}
 \chi(f(q) +r)\right| = d W_\chi(\cI, \cS;\balpha, \bbeta),
 $$
where $\cS = \{f(q) ~:~q\le Q,\ q~\text{prime}\}$, $\cI = [1, R]$,
$$
\alpha_s = \frac{1}{d} \sum_{\substack{q\le Q,\, f(q) = s\\q~\text{prime}}} e^{i \psi_q}\, \qquad s\in\cS,
$$
with $0 \le \psi_q < 2\pi$ denoting the argument of the inner sum above for each $q$, and $\beta_x$ is the characteristic  function of primes in $\cI$. Writing $\cJ = [1, Q]$ and noting that $E(\cS)\leq E(f(\cJ))\leq d^{-4} T_{f, p}(\cJ)$ and the fact that condition~\eqref{eq:unit disc} holds for our choice of weights, we may use Theorem~\ref{thm:main3} and Lemma~\ref{cor:Gen Set Energy} to bound $W_\chi(\cI, \cS;\balpha, \bbeta)$.

The size restrictions in~\eqref{eq:cond SX} impose the condition 
$$ \xi\le \min\{1/2, 2 - 2 \zeta\}
$$ 
and, for sufficiently large $r$, to ensure that the second and third terms in the bound on $W_\chi(\cI, \cS;\balpha, \bbeta)$ are non-trivial we need the conditions $\zeta + 5\xi/2 >1$ and $\zeta + \xi>1/2$ respectively. The first term is non-trivial for $\xi>1/2 - 2/d(d+1)$ and $\xi+\zeta>1/2$.

The other sum can be estimate similarly after writing it as a bilinear sums. 

This gives the desired bound for sufficiently large $p$. Adjusting the value of $\delta>0$, we can make sure 
it holds for all $p$.

\section*{Acknowledgements}

During the preparation of this work, B.~Kerr was supported by the ARC Grant DE220100859 and A. Mohammadi and
I.~E.~Shparlinski   by the ARC Grant DP200100355.

 \end{document}